\documentclass[11pt]{article}
\usepackage[T1]{fontenc}
\usepackage{lmodern}
\usepackage{amsmath, amssymb, amsfonts}
\usepackage{epsfig}

\def\be{\begin{equation}}
\def\ee{\end{equation}}
\def\bea{\begin{eqnarray}}
\def\eea{\end{eqnarray}}
\def\bes{\begin{eqnarray*}}
\def\ees{\end{eqnarray*}}

\def\nn{\nonumber}
\def\<{\langle}
\def\>{\rangle}
\def\lb{\label}
\def\bs{\setminus}

\def\R{{\mathbb{R}}}
\def\C{{\mathbb{C}}}
\def\Z{{\mathbb{Z}}}
\def\N{{\mathbb{N}}}
\def\U{{\mathbf{U}}}
\def\Q{{\mathbb{Q}}}

\def\RP{{\mathbb{R}P^{n}}}

\def\aa{{\alpha}}
\def\bb{{\beta}}

\def\th{{\theta}}
\def\om{{\omega}}
\def\Om{{\Omega}}

\def\lm{{\lambda}}

\def\sg{{\sigma}}

\def\K{{\cal K}}
\def\P{{\cal P}}

\def\ker{{\rm ker}}

\def\rank{{\rm rank}}
\def\Sp{{\rm Sp}}

\def\dm{{\diamond}}

\title{Multiplicity of closed geodesics on odd-dimensional bumpy Finsler compact space form}
\author{Hui Liu $^{1}$,\thanks{Partially supported by NSFC (12371195) . E-mail: huiliu00031514@whu.edu.cn.}\quad 
Linxinzi Qi$^2$,\thanks{ E-mail: 2025102010002@whu.edu.cn}\\
School of Mathematics and Statistics, Wuhan University,
\\Wuhan 430072, Hubei, China\\}

\date{}
\begin{document}
\newtheorem{definition}{Definition}[section]
\newtheorem{theorem}{Theorem}[section]
\newtheorem{lemma}{Lemma}[section]
\newtheorem{corollary}{Corollary}[section]
\newtheorem{conjecture}{Conjecture}[section]
\newtheorem{example}{Example}[section]
\newtheorem{property}{Property}[section]
\newtheorem{proposition}{Proposition}[section]
\newtheorem{remark}{Remark}[section]

\newcommand{\qed}{\nolinebreak\hfill\rule{2mm}{2mm}
\par\medbreak}
\newcommand{\Proof}{\par\medbreak\it Proof: \rm}
\newcommand{\rem}{\par\medbreak\it Remark: \rm}
\newcommand{\defi}{\par\medbreak\it Definition : \rm}
\renewcommand{\thefootnote}{\arabic{footnote}}

\maketitle

\begin{abstract}
Let $M = S^{2n+1}/\Gamma$ be a manifold diffeomorphic to a compact space form, endowed with a bumpy Finsler metric $F$.
In this paper, we prove that there exist at least $n+3$ contractible minimal closed geodesics on $M$,
assuming that all contractible minimal closed geodesics have non-zero Morse indices. Here, a contractible closed geodesic is called minimal
if it is not an iteration of any other contractible closed geodesic.
In particular, this confirms the generalized Anosov-Katok conjecture for the bumpy quotient manifold $(S^3/\Gamma, F)$ under the non-zero Morse index assumption.
\end{abstract}

{\bf Key words}: Contractible minimal closed geodesics;
Morse theory; the enhanced common index jump theorem

{\bf AMS Subject Classification}: 53C22, 58E05, 58E10.

\renewcommand{\theequation}{\thesection.\arabic{equation}}
\makeatletter\@addtoreset{equation}{section}\makeatother

\setcounter{equation}{0}
\section{Introduction}
Let $M=S^{2n+1}/ \Gamma$, where $\Gamma$ is a finite group which acts freely and
isometrically on the $(2n+1)$-sphere and therefore $M$ is diffeomorphic to a compact space form
which is typically a non-simply connected manifold. In 2019, Liu \cite{Liu2019} established the optimal lower
bound estimate of the number of closed geodesics on $S^{2n+1}/\Gamma$ with a bumpy irreversible Finsler metric
under some curvature condition. In this paper, we study the multiplicity of closed geodesics on the quotient
manifold $S^{2n+1}/\Gamma$ equipped with a bumpy Finsler metric, without any curvature condition.

A closed curve on a Finsler manifold is a closed geodesic if it is
locally the shortest path connecting any two nearby points on this
curve. As usual, on any Finsler manifold
$(M, F)$, a closed geodesic $c:S^1=\R/\Z\to M$ is {\it prime}
if it is not a multiple covering (i.e., iteration) of any other closed geodesics.
 Here the $m$-th iteration $c^m$ of $c$ is defined
by $c^m(t)=c(mt)$. The inverse curve $c^{-1}$ of $c$ is defined by
$c^{-1}(t)=c(1-t)$ for $t\in \R$.  Note that unlike Riemannian manifolds,
the inverse curve $c^{-1}$ of a closed geodesic $c$
on an irreversible Finsler manifold need not be a geodesic.
We call two prime closed geodesics
$c$ and $d$ {\it distinct} if there is no $\th\in (0,1)$ such that
$c(t)=d(t+\th)$ for all $t\in\R$.
For a closed geodesic $c$ on $(M,\,F)$, denote by $P_c$
the linearized Poincar\'{e} map of $c$. Recall that a Finsler metric $F$ is {\it bumpy} if all the closed geodesics
on $(M, \,F)$ are non-degenerate, i.e., $1\notin \sigma(P_c)$ for any closed
geodesic $c$.

The study of the existence and multiplicity of closed geodesics is a classical problem in differential geometry and dynamical systems.
There is a celebrated conjecture which claims the existence of infinitely many
distinct closed geodesics on compact Riemannian manifolds. This conjecture has been proved in many cases: For compact Riemannian manifolds,
Gromoll and Meyer \cite{GM1969JDG} in 1969 proved that there exist infinitely many distinct closed geodesics if the free loop space has an unbounded Betti number sequence. 
Then for compact simply connected manifolds Vigué-Poirrier and Sullivan \cite{VS1976} further proved in 1976 that the Betti number sequence is unbounded if and 
only if $H^{*}(M,\Q)$ is generated by more than one element. Franks \cite{Franks1992} and Bangert \cite{Bangert1993} proved that 
there are always infinitely many distinct closed geodesics on every Riemannian sphere $S^2$ (cf. also Hingston \cite{Hingston1993}, Klingenberg \cite{Kli1995}).

However, in the Finsler case, this problem becomes completely different.
In 1973, Katok \cite{Katok1973} constructed remarkable examples of irreversible Finsler metrics on compact rank one symmetric spaces (CROSSs) with finitely many
prime closed geodesics. In particular, there are exactly $2[\frac{n+1}{2}]$ prime closed geodesics on $S^n$ with Katok's metric (cf. also Ziller \cite{Ziller1977}, \cite{Ziller1982}).

Based on it, Anosov \cite{Ano} conjectured in 1974 that the optimal lower bound for the number of prime closed geodesics on any Finsler sphere $S^n$ should be $2[\frac{n+1}{2}]$.
This conjecture was solved for $S^2$ by Bangert and Long \cite{BL2010} in 2004 (published in 2010). For more interesting results and recent progress regarding this conjecture,
we refer the reader to \cite{DL2010}, \cite{HiR}, \cite{LD2009}, \cite{Rad2007}, \cite{Wang2008}, \cite{Wang2012}.
A fundamental tool developed to address such multiplicity problems on Finsler manifolds is the index iteration theory for symplectic paths.
In particular, the enhanced common index jump theorem established by Duan, Long, and Wang \cite{DLW2} has played an important role in recent advancements.
This enhanced theorem provides precise estimates for the Maslov-type indices of the iterates of closed geodesics.
Using this tool, Duan and Qi \cite{DQ2025} recently solved the Anosov conjecture for the bumpy Finsler $S^3$ assuming all prime closed geodesics have positive Morse indices.
Very recently, Duan and Xie \cite{DX2026} proved the existence of $\frac{n+5}{2}$ prime closed geodesics on any bumpy Finsler sphere $(S^n, F)$ with an odd integer $n \ge 3$,
under the assumption that all geometrically distinct prime closed geodesics possess positive Morse indices.

For any $S^n/\Gamma$, it is a natural conjecture that there exist at least $2[\frac{n+1}{2}]$ distinct closed geodesics, which is called the generalized Anosov-Katok
conjecture. There are many active studies on this conjecture in recent years. For example, it is well known that the only non-trivial group which acts freely on $S^{2n}$ is $\Z_2$
and $S^{2n}/\Z_2$ is $\R P^{2n}$. In 2016, Taimanov \cite{Tai2016} proved the existence of at least two distinct non-contractible closed geodesics on $\R P^2$ endowed with a bumpy irreversible Finsler metric. 
In \cite{LX},  Liu and Xiao established
the resonance identity for the non-contractible closed geodesics on $\mathbb{R}P^n$, and
together with \cite{DLX2015} and \cite{Tai2016} proved the existence of at least two distinct
non-contractible closed geodesics on every bumpy $\mathbb{R}P^n$ with $n\geq2$.
Furthermore, Liu, Long and Xiao \cite{LLX}
proved that every bumpy Finsler compact space form $S^n/ \Gamma$ possesses two distinct
closed geodesics in each of its nontrivial classes.
In 2018, Ginzburg, Gurel and Macarini \cite{GGM} established this conjecture on bumpy Finsler $\mathbb{R}P^{2n}$ under some index conditions. So it is natural to ask about the multiplicity of closed geodesics on an odd-dimensional quotient manifold $S^{2n+1}/\Gamma$.

Motivated by Duan and Xie \cite{DX2026} and Liu \cite{Liu2019}, in this paper, we drop the positive curvature assumption in Liu \cite{Liu2019}
and obtain a multiplicity result for contractible minimal closed geodesics on the bumpy odd-dimensional Finsler quotient manifold $(S^{2n+1}/\Gamma,F)$, as stated in the following theorem. Note that a contractible closed
geodesic c is called minimal if it is not an iteration of any other contractible closed
geodesics.
\begin{theorem}\label{Thm1.1}
     Let $M=S^{2n+1}/\Gamma$ be endowed with a bumpy Finsler metric $F$.
    If all contractible minimal closed geodesics have non-zero Morse indices, then there exist $n+3$ contractible minimal closed geodesics.
\end{theorem}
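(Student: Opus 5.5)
We argue by contradiction: suppose there are only finitely many contractible minimal closed geodesics $\{c_k\}_{k=1}^q$ with $q\le n+2$. The approach follows Duan--Xie \cite{DX2026} and Liu \cite{Liu2019}. Let $\Lambda_0 M$ be the component of the free loop space consisting of contractible loops. The contractible closed geodesics are exactly the iterates $c_k^m$, since an iterate of a contractible closed geodesic is contractible. Lifting to $S^{2n+1}$ identifies the rational equivariant homology of $\Lambda_0 M$ with that of $\Lambda S^{2n+1}$ in the relevant range. In particular $\Lambda_0 M$ has the $S^1$-equivariant Betti numbers of $S^{2n+1}$: $b_j=0$ for odd $j$ or $j<2n$, $b_j=2$ for even $j\ge 4n$ with $j\equiv 0\pmod{2n}$ (up to the usual shift), and $b_j=1$ otherwise.

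Bumpiness makes every $c_k^m$ nondegenerate. Its critical module is then $\Q$ in degree $i(c_k^m)$ when $c_k^m$ is orientable (the parity condition $i(c_k^m)-i(c_k)\in 2\Z$) and $0$ otherwise. The Morse inequalities, combined with the resonance identity for contractible geodesics on $S^{2n+1}/\Gamma$ from \cite{Liu2019}, give $\sum_k \hat\chi(c_k)/\hat i(c_k)=\frac{n+1}{2n}$. The hypothesis $i(c_k)\ge 1$ together with the Bott-type estimates implies $\hat i(c_k)>0$, so every $c_k$ has positive mean index. This is exactly what is needed to apply the enhanced common index jump theorem of Duan--Long--Wang \cite{DLW2} to the symplectic paths $\gamma_{c_k}$.

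The theorem gives, for each $k$, an integer $m_k$ and a large even $2N$ such that
\[
i(c_k^{2m_k})\ge 2N-\tfrac{e(P_{c_k})}{2}+\ldots,\qquad
i(c_k^{2m_k})+\nu(c_k^{2m_k})\le 2N+\ldots,
\]
and similar estimates hold for $c_k^{2m_k\pm 1}$ and $c_k^{2m_k\pm m}$. Since we are free to choose $N$, we also use the enhanced version's flexibility of replacing $N$ by multiples and picking $m_k$ in arithmetic progressions.

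The core of the argument is counting. The Betti numbers force, for each even degree in a window $[2N-2n, 2N+2n]$, an orientable iterate with exactly that index, contributing a nontrivial local homology class. This gives about $n+1$ to $n+2$ required classes, counting multiplicity $2$ at the degrees where $b_j=2$. The jump estimates show that each $c_k$ contributes at most one iterate with index in the window, roughly $c_k^{2m_k}$. The one exception comes from elliptic parts with eigenvalues of special type, where $c_k^{2m_k\pm1}$ can land at the window's edges. Comparing the two counts then forces $q\ge n+3$, which is the contradiction.

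The main obstacle is this edge counting at the extreme degrees $2N-2n$, $2N$ and $2N+2n$. There one must rule out a single $c_k$ supplying two needed classes. I would first try the Duan--Xie device: use two different choices of $N$ with compatible rotations (the enhanced theorem's $(N,m_k)$ and $(2N,2m_k)$ variants) to see each $c_k$ occupy incompatible slots. Non-orientability and the extra class from $b_{2N}=2$ would then break the tie. The non-zero Morse index hypothesis is used here to exclude $i(c_k)=0$ geodesics, which would otherwise fill the bottom degrees.
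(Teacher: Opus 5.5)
Your proposal has a genuine gap at the step you call ``the core of the argument.'' The claim that each $c_k$ contributes at most one iterate with index in $[2N-2n,2N+2n]$ is false under the hypothesis $i(c_k)\ge 1$. The jump theorem plus Bott-type monotonicity only give $i(c_k^{2m_k-m})\le 2N-i(c_k)$ for $1\le m<2m_k$ and $i(c_k^{2m_k+m})\ge 2N+i(c_k)$ for $m\ge 1$. If, say, $i(c_k)\le 2n$, then $c_k^{2m_k\pm1}$, with indices $2N\pm i(c_k)$, already lie in your window. For a visible $c_k$ both have nonzero critical modules, since $2N\pm i(c_k)-i(c_k)$ is even. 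When some $i(c_k)=1$, the only degree the jump theorem isolates to the single iterate $c_k^{2m_k}$ is $2N$ itself, and $M_{2N}\ge\beta_{2N}=2$ yields only two geodesics. Your count on the other side is also off: with $n\mid N$, the Betti numbers over the even degrees of $[2N-2n,2N+2n]$ sum to $2n+4$, not $n+1$ or $n+2$. So ``comparing the two counts'' does not produce $n+3$, and your last paragraph offers a device you would try rather than an argument.

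The missing idea is to replace local window counting by a global alternating count followed by a parity argument.
\begin{itemize}
\item Choose the jump-theorem tuple with $n\mid N$, $N\ge 2n$, and the fractional-part approximation in Theorem 3.2 so sharp that integrality forces $\sum_j 2m_j\gamma_{c_j}=N\frac{n+1}{n}$ exactly. By $2$-periodicity of $a_j(m)$ this equals $\sum_j\sum_{m=1}^{2m_j}a_j(m)$.
\item The uniform index bounds split this sum into $\sum_{i=1}^{2N-1}(-1)^iM_i$ plus the signed number $(2N-1)_+^e-(2N-1)_+^o$. This counts the top iterates $c_j^{2m_j}$ with index $\ge 2N$ and nonzero module, with sign given by the parity of $i(c_j)$.
\item The alternating Morse inequality at $p=2N-1$, together with $\sum_{i\le 2N-1}\beta_i=N\frac{n+1}{n}-(n+2)$, gives $(2N-1)_+^e-(2N-1)_+^o\ge n+2$, hence $q\ge n+2$.
\item The extra geodesic comes from the equality case. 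If $q=n+2$, then $(2N-1)_+^o=0$ and every $c_j$ is counted in $(2N-1)_+^e$. So every $i(c_j)$ is even and positive, hence $\ge 2$. The $i\ge 2$ result (Lemma 4.2, from \cite{Liu2019}, using two jump-theorem tuples and $M_{2N}\ge\beta_{2N}=2$) then forces $q\ge 2n+2>n+2$.
\end{itemize}
This also shows where the hypothesis $i(c_j)\ge 1$ really enters:
\begin{itemize}
\item it gives $\hat i(c_j)>0$;
\item it gives $M_0=0$;
\item it gives the strict bounds $\le 2N-1$ and $\ge 2N+1$ that make the cutoff at $2N-1$ clean;
\item it drives the step ``even and $\ge 1$ implies $\ge 2$.''
\end{itemize}
It is not about filling bottom degrees of a window.
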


In particular, this result confirms the generalized Anosov-Katok conjecture for the 3-dimensional bumpy Finsler quotient manifold $(S^3/\Gamma, F)$ under the non-zero Morse index assumption.  
\begin{remark}\label{rk1.1}
    It should be noted that our result does not follow directly from the work of Duan and Xie \cite{DX2026}. This is because two distinct closed geodesics on the covering sphere might project to the same closed geodesic on the underlying quotient manifold.
\end{remark}

Suppose the assumption of Theorem 1.1 fails, meaning that the space $(S^{2n+1}/\Gamma, F)$ admits at least one contractible minimal closed geodesic with zero Morse index.
In this context, we recall a classical result by Bangert and Klingenberg \cite{BK1983}: for a compact manifold $M$ with a finite fundamental group,
if there exists a closed geodesic $c$ whose iterates $c^m$ are local minima of the energy functional $E$ for infinitely many $m \in \mathbb{N}$,
then $M$ admits infinitely many prime closed geodesics.

Furthermore, note that when a contractible minimal closed geodesic $c$ is non-degenerate,
the fact that $c$ is not a local minimum of the energy functional $E$ implies that its Morse index satisfies $i(c) > 0$.
Combining this observation with the above result of Bangert and Klingenberg,
we strongly suspect the existence of infinitely many contractible minimal closed geodesics in the case when the assumption of Theorem 1.1 fails.

Motivated by these considerations, we propose the following conjecture.
\begin{conjecture}\label{conj1.1}
    Let $(S^{n}/\Gamma, F)$ be a Finsler compact space form of dimension $n$.
If there exist only finitely many contractible minimal closed geodesics on $(S^{n}/\Gamma,F)$,
then none of the closed geodesics (including their iterates) is a local minimum of the energy functional $E$.
\end{conjecture}
In this paper, let $\N$, $\N_0$,  $\Z$, and $\Q$ denote the sets of natural integers, non-negative
integers, integers, rational numbers respectively.
We also use  notations $\lceil a \rceil=\min\{k\in\Z\,|\,k\ge a\}$,
$[a]=\max\{k\in\Z\,|\,k\le a\}$, $\varphi(a)=\lceil a \rceil-[a]$ and $\{a\}=a-[a]$ for any $a\in\R$.
Throughout this paper, we use $\Q$ coefficients for all homological and cohomological modules.
\section{Critical point theory for closed geodesics }
Let $(M,F)$ be a compact Finsler manifold, the space
$\Lambda=\Lambda M$ of $H^1$-maps $\gamma:S^1\rightarrow M$ has a
natural structure of Riemannian Hilbert manifolds on which the
group $S^1=\R/\Z$ acts continuously by isometries. This action is defined by
$(s\cdot\gamma)(t)=\gamma(t+s)$ for all $\gamma\in\Lambda$ and $s,
t\in S^1$. For any $\gamma\in\Lambda$, the energy functional is
defined by
\be  E(\gamma)=\frac{1}{2}\int_{S^1}F(\gamma(t),\dot{\gamma}(t))^2dt.
\lb{2.1}\ee
It is $C^{1,1}$ and invariant under the $S^1$-action. The
critical points of $E$ of positive energies are precisely the closed geodesics
$\gamma:S^1\to M$. The index form of the functional $E$ is well
defined along any closed geodesic $c$ on $M$, which we denote by
$E''(c)$. As usual, we denote by $i(c)$ the Morse index of $E$ at
$c$ and by $\nu(c)=\dim\ker(P_c-I)$ its reduced nullity.
For a closed geodesic $c$ we set $ \Lambda(c)=\{\gamma\in\Lambda\mid E(\gamma)<E(c)\}$.

Recall that respectively the mean index $\hat{i}(c)$ and the $S^1$-critical modules of $c^m$ are defined by
\be \hat{i}(c)=\lim_{m\rightarrow\infty}\frac{i(c^m)}{m}, \quad \overline C_*(E,c^m)
   = H_*\left((\Lambda(c^m)\cup S^1\cdot c^m)/S^1,\Lambda(c^m)/S^1; \Q\right).\lb{2.2}\ee

We call a closed geodesic $c$ satisfying the isolation condition, if
the following holds:

{\bf (Iso)  For all $m\in\N$ the orbit $S^1\cdot c^m$ is an
isolated critical orbit of $E$. }

Note that if the number of prime closed geodesics on a Finsler manifold
is finite, then all the closed geodesics satisfy (Iso).

If $c$ has multiplicity $m$, then the subgroup $\Z_m=\{\frac{n}{m}\mid 0\leq n<m\}$
of $S^1$ acts on $\overline C_*(E,c)$. As studied in p.59 of \cite{Rad1992},
for all $m\in\N$, let
$H_{\ast}(X,A)^{\pm\Z_m}
   = \{[\xi]\in H_{\ast}(X,A)\,|\,T_{\ast}[\xi]=\pm [\xi]\}$,
where $T$ is a generator of the $\Z_m$-action.

In the following, we let $M=S^{2n+1}/ \Gamma$ and $e$ be the identity in $\pi_1(M)$, where
$\Gamma$ acts freely and isometrically on the $(2n+1)$-sphere and therefore
$M$ is diffeomorphic to a compact space form. Then the free loop space $\Lambda M$ possesses
the natural decomposition
\bea \Lambda M=\bigsqcup_{[g]\in\operatorname{Conj}(\pi_1(M))}\Lambda_{[g]}M,\nn\eea
where $\Lambda_{[g]}M$ is the component corresponding to the conjugacy
class $[g]$. We set $\Lambda_{e}(c) = \{\gamma\in \Lambda_{e}M\mid E(\gamma)<E(c)\}$.
Note that for a contractible minimal closed geodesic $c$, $c^m\in\Lambda_e M$
for every $m\in\N$.

Every nonconstant closed geodesic has a unique prime root up to an
$S^1$-shift. Write a contractible minimal closed geodesic as $c=d^p$,
where $d$ is prime. If $g\in\pi_1(M)$ is any representative of the
free homotopy class of $d$, then
\be p=\operatorname{ord}(g)=\operatorname{mult}(c). \lb{root-order}\ee
Equation (\ref{root-order}) follows because contractibility gives
$g^p=e$, while $\operatorname{ord}(g)<p$ would express $c$ as a
nontrivial iterate of another contractible closed geodesic. 
Thus $d$ may be non-contractible, while $c=d^p$ is precisely its first
contractible iterate.

Now we restrict the energy functional $E$ on the contractible component $\Lambda_e M$ and study the Morse theory on $\Lambda_e M$.
Thus we define the $S^1$-critical modules of $c^m$ for $E|_{\Lambda_e M}$ as
\be \overline C_*(E,c^m; [e])
   = H_*\left((\Lambda_e(c^m)\cup S^1\cdot c^m)/S^1,\Lambda_e(c^m)/S^1; \Q\right).\ee

In 2019, Liu \cite{Liu2019} developed the critical group theory for the contractible component of the quotient manifold $S^{2n+1}/\Gamma$ and computed the $S^1$-equivariant Betti number sequence of $ (\Lambda_eM, \Lambda^0M)$ via Katok's famous metrics on $S^n$, where $\Lambda^0 M  =\{{\rm constant\;point\;curves\;in\;}M\}\cong M$. So we have the following lemmas and theorems.
\begin{definition}(cf. Definition 5.1 of \cite{Liu2019})
A contractible minimal closed geodesic $c$ is \emph{homologically
visible} if $\overline C_{\ell}(E,c;[e])\ne0$ for some $\ell\in\Z$.
Otherwise it is \emph{homologically invisible}.
\end{definition}

\begin{proposition}(cf. Satz 6.11 of \cite{Rad1992} and Proposition 2.1 of \cite{Liu2019})
\label{Rad1992'}
Let $c=d^p$ be a contractible minimal closed geodesic on
$M=S^{2n+1}/\Gamma$, where $d$ is prime, and assume that $c$
satisfies \textup{(Iso)}. For every $m\in\N$, the full isotropy group is
\be
G_m:=\operatorname{Stab}_{S^1}(c^m)
=\operatorname{Stab}_{S^1}(d^{pm})=\Z_{pm}. \lb{full-isotropy}
\ee
There exist a $G_m$-invariant local negative disk at $c^m$ and a
$G_m$-invariant local characteristic manifold $N_{c^m}$. Denote the
punctured negative disk by $U_{c^m}^-$ and put
$N_{c^m}^-=N_{c^m}\cap\Lambda_e(c^m)$. Then
$\nu(c^m)=\dim N_{c^m}$ and, for every $\ell\in\Z$,
\begin{align}
\overline C_{\ell}(E,c^m;[e])
\cong\Big(&H_{i(c^m)}(U_{c^m}^-\cup\{c^m\},U_{c^m}^-;\Q)\nonumber\\
&\otimes H_{\ell-i(c^m)}
(N_{c^m}^-\cup\{c^m\},N_{c^m}^-;\Q)\Big)^{G_m}. \lb{local-splitting}
\end{align}

Let $T_m$ be the generator of $G_m$ induced by the time shift
$t\mapsto t+1/(pm)$. Its action on the orientation line of the
negative disk is multiplication by
\be
\epsilon(c^m)=(-1)^{i(d^{pm})-i(d)}=(-1)^{i(c^m)-i(d)}. \lb{orientation-character}
\ee
Consequently,
\be
\overline C_{\ell}(E,c^m;[e])\cong
H_{\ell-i(c^m)}(N_{c^m}^-\cup\{c^m\},N_{c^m}^-;\Q)
 ^{\epsilon(c^m)G_m}, \lb{local-characteristic}
\ee
where the superscript denotes the $\epsilon(c^m)$-eigenspace of
$(T_m)_*$, that is,
$V^{\epsilon(c^m)G_m}=\{\xi\in V\mid (T_m)_*\xi=\epsilon(c^m)\xi\}$.
In particular, if $\nu(c^m)=0$, then
\be
\overline C_{\ell}(E,c^m;[e])=
\begin{cases}
\Q,&\ell=i(c^m)\ \text{and}\ i(d^{pm})-i(d)\in2\Z,\\
0,&\text{otherwise}.
\end{cases} \lb{bumpy-full-isotropy}
\ee

If $F$ is bumpy, then $c$ is homologically visible if and only if
$i(c)-i(d)\in2\Z$. For a visible $c$,
\be
\overline C_{\ell}(E,c^m;[e])=
\begin{cases}
\Q,&\ell=i(c^m)\ \text{and}\ i(c^m)-i(c)\in2\Z,\\
0,&\text{otherwise},
\end{cases} \lb{visible-tower}
\ee
whereas for an invisible $c$ one has
\be \overline C_{\ell}(E,c^m;[e])=0,
\qquad \ell\in\Z,\quad m\in\N. \lb{invisible-tower}\ee
\end{proposition}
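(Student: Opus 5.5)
We take Rademacher's local computation (Satz 6.11 of \cite{Rad1992}) and transplant it to the contractible component $\Lambda_e M$. First, since $E$ restricted to $\Lambda_e M$ is the restriction to a union of components, the pair $(\Lambda_e(c^m)\cup S^1\cdot c^m,\Lambda_e(c^m))$ agrees near $S^1\cdot c^m$ with $(\Lambda(c^m)\cup S^1\cdot c^m,\Lambda(c^m))$. Excision then reduces $\overline C_*(E,c^m;[e])$ to the usual $S^1$-critical module, computed in a slice at $c^m$.

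\textbf{Isotropy.} If $c=d^p$ with $d$ prime, then $c^m=d^{pm}$. A shift $s\in S^1$ fixes $d^{pm}$ exactly when $s\in\frac{1}{pm}\Z/\Z$, because $d$ is prime. This gives (\ref{full-isotropy}). Note that $G_m$ is the full stabilizer, including the shifts by $1/(pm)$ that are not multiples of $1/m$. This is exactly why $d$, not $c$, enters the sign.

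\textbf{Splitting.} By (Iso), $S^1\cdot c^m$ is an isolated critical orbit. We take a $G_m$-invariant slice $\mathcal S$ transversal to the orbit. The quotient $(\Lambda(c^m)\cup S^1 c^m)/S^1$ is then locally $\mathcal S/G_m$, and since $G_m$ is finite and we use rational coefficients,
$$H_*(\mathcal S/G_m,\cdot)\cong H_*(\mathcal S,\cdot)^{G_m}.$$
Next apply the Gromoll--Meyer splitting lemma on the slice, in its equivariant form (finite-dimensional reduction via the standard finite-dimensional approximation of $\Lambda$ by broken geodesics, which is $\Z_{pm}$-equivariant). This writes $E$ locally as a nondegenerate quadratic form on the negative and positive spaces plus a function on an invariant characteristic manifold $N_{c^m}$ of dimension $\nu(c^m)$. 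The Künneth formula then gives (\ref{local-splitting}).

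\textbf{Sign character.} The negative-disk factor is one-dimensional in degree $i(c^m)$, so $T_m$ acts on it by $\pm1$, namely by the determinant of $T_m$ on the negative eigenspace $V^-(d^{pm})$. Decompose the index form of $d^{pm}$ into eigenspaces of $T_m$, the $\omega$-twisted Bott pieces with $\omega^{pm}=1$. Complex-conjugate pairs contribute determinant $1$. The $+1$ eigenspace is the set of $d$-periodic fields, of dimension $i(d)$. The remaining $-1$ eigenspace (present only when $pm$ is even) therefore has dimension $i(d^{pm})-i(d)$ modulo $2$, after again discarding conjugate pairs. Hence $\det=(-1)^{i(d^{pm})-i(d)}$, which is (\ref{orientation-character}).

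\textbf{Conclusion.} Taking $G_m$-invariants of a tensor product in which $G_m$ acts on the first factor by $\epsilon$ yields the $\epsilon$-eigenspace of the second factor, which is (\ref{local-characteristic}). If $\nu(c^m)=0$, then $N_{c^m}$ is a point, the second factor is $\Q$ in degree $0$ with trivial action, and we obtain (\ref{bumpy-full-isotropy}).

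\textbf{Bumpy case.} Take $m=1$: $c$ is visible iff $i(c)-i(d)$ is even. If $c$ is visible, then $i(c^m)-i(d)\equiv i(c^m)-i(c)\pmod 2$, giving (\ref{visible-tower}). If $c$ is invisible, the parity claim (\ref{invisible-tower}) requires $i(c^m)-i(d)$ to be odd for every $m$. We would argue this via Bott's formula: the parity of $i(d^{k})-i(d)$ depends only on whether $k$ is even and on the number of $-1$ eigenvalue pairs. In the bumpy case, $i(d^{2j})-i(d^{j})$ has a parity determined by the elliptic height of $-1$, and one checks that it is even when $i(d^p)-i(d)$ is odd, since then $p$ must be even.

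\textbf{Main obstacle.} This last parity bookkeeping is the step I expect to be the main obstacle. The equivariant splitting is routine and follows \cite{Rad1992}.
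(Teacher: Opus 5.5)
Your isotropy, splitting and sign-character steps are sound and follow the paper's route. Where the paper simply cites Rademacher and LLX for (\ref{orientation-character}), your determinant computation over the $\omega$-twisted Bott pieces is a correct derivation of it.

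The gap is in the bumpy-case paragraph, i.e.\ in (\ref{invisible-tower}). First, bumpiness excludes $-1\in\sigma(P_d)$, since otherwise $d^2$ would be degenerate. So no ``elliptic height of $-1$'' or count of $-1$ eigenvalues can control the parity; that quantity is zero here. The relevant quantity is the $(-1)$-Bott index $i_{-1}(d)$. In the nondegenerate case its parity equals the number of real eigenvalues of $P_d$ in $(-\infty,-1)$, which can be odd for bumpy metrics; this is exactly how invisible towers arise. Second, your assertion that $i(d^{2j})-i(d^j)$ is even when $i(d^p)-i(d)$ is odd is false for odd $j$. For $j=1$ one has $i(d^2)-i(d)\equiv i(d^p)-i(d)\equiv 1$, because $p$ is even. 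Even if you restrict to $j$ divisible by $p$, comparing $d^{2j}$ with $d^j$ only links $m$ to $2m$, so it never reaches, say, $m=3$. As written, the step does not show that $i(d^{pm})-i(d)$ is odd for every $m$.

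The missing ingredient is the parity relation $i(d^{k+2})-i(d^k)\in2\Z$, which the paper uses. Your own decomposition already gives it: conjugate pairs $\omega,\bar\omega$ contribute $2i_\omega(d)$, so $i(d^k)-i(d)\equiv i_{-1}(d)$ for even $k$ and $\equiv 0$ for odd $k$. Now argue as follows.
\begin{itemize}
\item If $p$ were odd, $i(d^p)-i(d)$ would be even, so invisibility forces $p$ to be even.
\item Then $pm$ is even for every $m$, so $i(d^{pm})-i(d)\equiv i(d^p)-i(d)\equiv 1$.
\item Hence $\epsilon(c^m)=-1$ for all $m$, and (\ref{bumpy-full-isotropy}) gives (\ref{invisible-tower}).
\end{itemize}
Equivalently, your character computation shows $\epsilon(c^m)=(-1)^{i_{-1}(d)}$ when $pm$ is even and $\epsilon(c^m)=1$ when $pm$ is odd, which settles the invisible case at once. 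This is a two-line fix, not the main obstacle you anticipated, but the argument you actually wrote down does not work.
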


{\it Proof.}
Since $d$ is prime, $c^m=d^{pm}$ has isotropy group $\Z_{pm}$.

The equivariant Gromoll--Meyer splitting lemma, applied with this full
isotropy group, gives (\ref{local-splitting}). The action on the
negative orientation line is the standard index-orientation character
(\ref{orientation-character}); see Satz 6.11 of \cite{Rad1992} and
Lemma 2.1 of \cite{LLX}. The free action on $S^{2n+1}$ is
orientation preserving, so $M$ and the normal bundle of $d$ are
orientable. This proves (\ref{local-characteristic}) and
(\ref{bumpy-full-isotropy}).

For the last assertions, use the parity relation
$i(d^{k+2})-i(d^k)\in2\Z$. If $i(d^p)-i(d)$ is even, the condition in
(\ref{bumpy-full-isotropy}) is equivalent to
$i(c^m)-i(c)\in2\Z$. If $i(d^p)-i(d)$ is odd, the corresponding
orientation character of $\Z_p$ is nontrivial, hence $p$ is even.
Then $i(d^{pm})-i(d)$ is odd for every $m\ge1$, and bumpiness implies
(\ref{invisible-tower}). \hfill $\square$

\begin{proposition}(cf. Proposition 4.1 of \cite{Liu2019})
    For $M=S^{2n+1}/ \Gamma$, the $S^1$-equivariant Betti numbers
of $(\Lambda_e M, \Lambda^0 M)$ are given by
\bea \beta_j\equiv\rank H_j(\Lambda_e M/S^1,\Lambda^0 M/S^1;\Q)  =\left\{\begin{array}{ll}
    2,&\quad {\it if}\quad j\in \K\equiv \{2nk\,|\,2\le k\in\N\},  \\
    1,&\quad {\it if}\quad j\in \{2n+2k\,|\,k\in\N_0\}\bs\K,  \\
    0 &\quad {\it otherwise}. \\
\end{array}
\right. \lb{ksfzbts}\eea
and the average $S^1$-equivariant Betti number of $(\Lambda_e M, \Lambda^0 M)$ satisfies
\be  \overline{B}(\Lambda_e M, \Lambda^0 M;\Q)\equiv\lim_{L\to+\infty}\frac{1}{L}
\sum_{k=0}^{L}(-1)^k\beta_k= \frac{n+1}{2n}.   \lb{aB.1}\ee
\end{proposition}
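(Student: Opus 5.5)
The statement to prove is the second proposition, Proposition 4.1 of \cite{Liu2019}: the Betti numbers (\ref{ksfzbts}) and the average (\ref{aB.1}). I would compute $\beta_j$ from one explicit metric on $M$ and then evaluate the limit.

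\emph{Step 1: reduce to a metric with known geodesics.} The groups $H_*(\Lambda_e M/S^1,\Lambda^0M/S^1;\Q)$ depend only on the homotopy type of the pair, hence only on $M$ and not on $F$. So I may use any convenient metric. I would use the round metric, or more flexibly a Katok-type $\Gamma$-invariant metric on $S^{2n+1}$ whose closed geodesics are all non-degenerate and explicitly known, and which descends to $M$. The Katok metric is invariant under a torus, and one can take $\Gamma$ inside it after conjugation when $\Gamma$ acts linearly.

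\emph{Step 2: identify the contractible geodesics and their critical modules.} For each prime closed geodesic $\tilde d$ on $S^{2n+1}$ with image $d$ on $M$, let $p$ be the order of the class of $d$. The contractible minimal geodesic is then $c=d^p$, and its lift is a closed geodesic on the sphere. The contractible loops of $M$ are exactly the projections of closed loops on $S^{2n+1}$. Hence the contractible critical orbits on $M$ correspond to critical orbits on $S^{2n+1}$, modulo the free $\Gamma$-action. Since $\Gamma$ acts isometrically and preserves indices, $i(c^m)$ equals the index of the corresponding lifted iterate. The local modules are then given by (\ref{visible-tower}) and (\ref{invisible-tower}), and I must check the parity condition $i(c)-i(d)\in 2\Z$ for each $c$.

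\emph{Step 3: count and compare with the sphere.} Using the explicit Katok indices (Ziller's formulas), every critical orbit contributes in a single degree. The Morse inequalities become equalities because all nonzero modules sit in degrees of the same parity, which forces the boundary maps to vanish. Counting the visible contributions in each degree then gives $\beta_j$: $2$ for $j\in\K$, $1$ for the remaining $j\ge 2n$ even, and $0$ otherwise. This reproduces the sphere pattern, even though the geodesics are distributed differently.

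\emph{Main obstacle: an alternative via transfer.} The difficult point is making Step 2 rigorous for general $\Gamma$, in particular the bookkeeping of visibility together with the $\Gamma$-quotient. As an alternative I would try the following. The lifting map $\Lambda_eM\to\Lambda S^{2n+1}$, sending a loop to its lift based at a chosen preimage, identifies $\Lambda_e M$ with $\Lambda S^{2n+1}/\Gamma$. Here $\Gamma$ acts pointwise and commutes with the $S^1$-action. With $\Q$ coefficients, the transfer then gives
\[
H_*(\Lambda_eM/S^1,\Lambda^0M/S^1)\cong H_*(\Lambda S^{2n+1}/S^1,\Lambda^0S^{2n+1}/S^1)^{\Gamma}.
\]
Since $\Gamma$ lies in the identity component of $SO(2n+2)$ (it acts freely, hence preserves orientation, in the linear case), it acts trivially on homology. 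So the Betti numbers coincide with the known equivariant Betti numbers of $S^{2n+1}$ (Rademacher), which are exactly (\ref{ksfzbts}).

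\emph{Step 4: the average.} Finally I compute $\overline B$. Only even degrees contribute, each with sign $+1$. Over a long range of length $L$ there are about $L/2$ degrees of the form $2n+2k$, and the degrees in $\K$ add an extra $L/(2n)$. This gives
\[
\overline B=\frac12+\frac1{2n}=\frac{n+1}{2n}.
\]
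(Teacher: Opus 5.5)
The paper gives no proof of its own here. It quotes Liu's computation and describes it as done with Katok metrics. Your transfer argument is a correct and genuinely different proof, and it is the part of your proposal that actually establishes the statement.

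\textbf{Repairs to the transfer argument.} State it through the projection rather than a ``lifting map'', which is not globally well defined. The projection $\Lambda S^{2n+1}\to\Lambda_eM$ is a regular covering whose deck group $\Gamma$ acts by $\tilde\gamma\mapsto g\circ\tilde\gamma$. This action commutes with the $S^1$-action, and $\Lambda^0S^{2n+1}$ is the full preimage of $\Lambda^0M$.

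Note that $\Gamma$ need not act freely on $\Lambda S^{2n+1}/S^1$: for $\Gamma=\Z_k$ acting by complex scalars, every Hopf circle gives a fixed point. So either invoke the rational transfer for finite group actions, or, more cleanly, pass to the Borel construction $\Lambda\times_{S^1}ES^1$. There $\Gamma$ acts freely, and you can then use the standard rational identification of relative $S^1$-equivariant homology with the homology of the orbit-space pair.

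Your triviality argument is right. A fixed-point-free self-map of an odd sphere has degree $1$ by Lefschetz; for isometric $\Gamma$ this is simply $\Gamma\subset SO(2n+2)$. An isotopy to the identity, applied pointwise to loops, is $S^1$-equivariant and preserves constant loops. The Betti numbers are then the known values for $\Lambda S^{2n+1}$ (with $d-1=2n$), and your computation $\overline B=\frac12+\frac1{2n}$ is correct.

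\textbf{Comparison of the two routes.} The transfer route treats every free $\Gamma$ uniformly and avoids all index and visibility bookkeeping. Its price is importing the Betti numbers of $\Lambda S^{2n+1}$. A Katok computation, where it is available, is self-contained on $M$.

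\textbf{Steps 1--3 do not work as written.} A torus is abelian, so only abelian groups $\Gamma$ can be conjugated into the isometry torus of a bumpy Katok metric. For free actions on spheres, abelian means cyclic.

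Take $\Gamma=Q_8$, or a binary polyhedral group, acting on $S^3=SU(2)$ by left multiplication. The centralizer of $\Gamma$ in $SO(4)$ consists of right multiplications, whose one-parameter subgroups rotate two orthogonal planes at equal speed. A Katok metric built from such a Killing field has non-isolated critical manifolds of closed geodesics (the orbits of that circle action). So the ``one degree per critical orbit'' count in Step 3 is unavailable.

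Even for lens spaces, Step 3 asserts rather than verifies that every contractible tower is visible and sits in even degree. By the criterion $i(c)-i(d)\in2\Z$, this requires the parity of $i(d)$ for the non-contractible prime roots, which you do not compute.

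So present the transfer argument as the proof, and either drop the Katok sketch or restrict it to cyclic $\Gamma$.
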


For a contractible minimal closed geodesic $c_j=d_j^{p_j}$, define
\be
\epsilon_{j,m}=(-1)^{i(d_j^{p_jm})-i(d_j)},\qquad
k_{\ell}^{\epsilon_{j,m}}(c_j^m)
=\dim H_{\ell}(N_{c_j^m}^-\cup\{c_j^m\},N_{c_j^m}^-;\Q)
 ^{\epsilon_{j,m}\Z_{p_jm}}. \lb{local-type-number}
\ee
Put
\be
a_j(m)=\sum_{\ell=0}^{4n}(-1)^{\ell+i(c_j^m)}
k_{\ell}^{\epsilon_{j,m}}(c_j^m),\qquad
\widehat\chi(c_j)=\lim_{L\to\infty}\frac1L\sum_{m=1}^{L}a_j(m).
\lb{mean-euler-number}
\ee
The analytic-period theorem in \cite{DL2010} and \cite{LD2009},  applied to the prime root $d_j$ shows
that the full-isotropy local data along the subsequence
$d_j^{p_jm}$ are periodic; hence the limit exists.

\begin{theorem}(cf. Theorem 1.1 of \cite{Liu2019})
    Let $M=S^{2n+1}/ \Gamma$ and $e$ be the identity in $\pi_1(M)$.
Suppose the Finsler manifold $(M,F)$ possesses only finitely many contractible minimal closed geodesics,
among which we denote the distinct homologically visible contractible minimal
closed geodesics by $c_1, \ldots, c_r$  for some integer $r>0$. Then we have
\bea  \sum_{j=1}^{r}\frac{\widehat{\chi}(c_j)}{\hat{i}(c_j)} = \overline{B}(\Lambda_e M, \Lambda^0 M;\Q) =
    \frac{n+1}{2n},     \lb{reident1}\eea
where $\Lambda_{e} M$ is the contractible loop space of $M$,
$\Lambda^0 M  =\{{\rm constant\;point\;curves\;in\;}M\}\cong M$,
and the local quantities are defined by (\ref{local-type-number}) and
(\ref{mean-euler-number}).

In particular, if the Finsler metric $F$ on $M=S^{2n+1}/ \Gamma$ is bumpy,
then $a_j(m+2)=a_j(m)$ and
\be
\gamma_{c_j}:=\widehat\chi(c_j)=\frac{a_j(1)+a_j(2)}{2}. \lb{gamma-definition}
\ee
Thus (\ref{reident1}) has the following simple form:
\bea  \sum_{j=1}^{r}\frac{\gamma_{c_j}}{\hat{i}(c_{j})}
=\frac{n+1}{2n}.  \lb{breident1}\eea
\end{theorem}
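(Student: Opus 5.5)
We must identify the statement: the last theorem is Theorem (cf. Theorem 1.1 of Liu2019), the resonance identity, including the bumpy simplification $a_j(m+2)=a_j(m)$ and $\gamma_{c_j}=(a_j(1)+a_j(2))/2$. So the plan is to prove the resonance identity on $\Lambda_eM$ and then its bumpy specialization.

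The plan follows the Rademacher/Duan--Long strategy, restricted to the contractible component $\Lambda_eM$. The first step is to compute the Morse series of $E|_{\Lambda_eM}$ relative to $\Lambda^0M$. Since there are only finitely many contractible minimal closed geodesics, every contractible closed geodesic is some $c_j^m$. Hence (Iso) holds, and the $S^1$-equivariant Morse inequalities on the pair $(\Lambda_eM/S^1,\Lambda^0M/S^1)$ hold with the critical modules $\overline C_*(E,c_j^m;[e])$. The invisible $c_j$ contribute nothing to any of these modules, by (\ref{invisible-tower}) in Proposition \ref{Rad1992'} (more generally by the definition of invisibility together with the analytic-period argument). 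Writing $M_q=\sum_{j,m}\dim\overline C_q(E,c_j^m;[e])$, we obtain Morse inequalities $M_q\ge\beta_q$ and the alternating-sum comparisons.

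The second step is to use (\ref{local-characteristic}) to express $\sum_q(-1)^q\dim\overline C_q(E,c_j^m;[e])$ as $a_j(m)$, summing over the range $0\le\ell\le 4n$ permitted by $\dim N_{c^m}\le 4n$.

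The third step is the averaging argument. Fix a large energy or index level $L$. Because $i(c_j^m)=m\hat i(c_j)+O(1)$ and the degrees $\ell-i(c_j^m)$ lie in $[0,4n]$, the number of $m$ with $c_j^m$ contributing in degrees $\le L$ is $L/\hat i(c_j)+O(1)$. Summing the Euler characteristics gives
\[
\sum_{q\le L}(-1)^q M_q=\sum_j\sum_{m\le L/\hat i(c_j)}a_j(m)+O(1).
\]
We then divide by $L$ and let $L\to\infty$. The limit of the left side equals $\overline B(\Lambda_eM,\Lambda^0M;\Q)$, because the Morse complex computes the homology and boundary effects are bounded, using the finiteness of the $\beta_q$ from (\ref{ksfzbts}). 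The periodicity of $a_j(m)$ in $m$, supplied by the analytic-period theorem applied to $d_j$, turns the right side into $\sum_j\widehat\chi(c_j)/\hat i(c_j)$. One also needs $\hat i(c_j)>0$ for the visible $c_j$. This follows because, otherwise, the indices $i(c_j^m)$ would stay bounded, and the Morse series would then fail to grow linearly, contradicting the infinitely many nonzero $\beta_j$ in (\ref{ksfzbts}). The value $\frac{n+1}{2n}$ is read off from (\ref{aB.1}).

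For the bumpy case, $\nu(c_j^m)=0$, so $N_{c_j^m}$ is a point and (\ref{visible-tower}) gives
\[
a_j(m)=(-1)^{i(c_j^m)}\cdot\big[\,i(c_j^m)-i(c_j)\in2\Z\,\big].
\]
The parity relation $i(d^{k+2})-i(d^k)\in2\Z$ applied with $k=p_jm$ gives $i(c_j^{m+2})\equiv i(c_j^m)\pmod 2$. Hence $a_j(m+2)=a_j(m)$, and the Cesàro mean becomes $(a_j(1)+a_j(2))/2$, which is exactly (\ref{gamma-definition}). The main obstacle I expect is the third step, specifically justifying that the Morse-theoretic alternating sums converge to $\overline B$ with only $O(1)$ error. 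This needs the standard Euler-characteristic filtration argument on sublevel sets and the uniform bound on the degree spread of each local module. The full-isotropy eigenspace bookkeeping (using $\Z_{p_jm}$ rather than $\Z_m$) must also be tracked correctly in that argument.
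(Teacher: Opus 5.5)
Your route is the one the paper relies on. The paper does not reprove this identity; it defers to the Rademacher-type Morse-series argument of \cite{Liu2019}, run with the full-isotropy modules of Proposition~\ref{Rad1992'}. Your Steps 1--3 and your bumpy specialization reproduce that argument: $a_j(m)=\pm1$ or $0$ according to the parity of $i(c_j^m)-i(c_j)$, with $2$-periodicity from $i(d^{k+2})-i(d^k)\in2\Z$.

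\textbf{The gap: your argument for $\hat i(c_j)>0$ fails.} A single tower with $\hat i(c_j)=0$ does not stop $\sum_{q\le L}(-1)^qM_q$ from growing linearly. The other towers can supply all the growth of $\sum_{q\le L}(-1)^q\beta_q$, so nothing in (\ref{ksfzbts}) is contradicted. What such a tower actually does is put the modules of infinitely many iterates into degrees $0\le q\le 4n$: the Bott formula gives $i(c_j^m)=0$ for all $m$, and $\nu(c_j^m)\le4n$. So some $M_q$ can be infinite. The inequalities (\ref{2.10})--(\ref{2.11}) do not forbid this, but your averaging step does not survive it. The boundary error is really controlled by $0\le(-1)^L\sum_{q=0}^{L}(-1)^q(M_q-\beta_q)\le M_{L+1}$, which follows from two consecutive instances of (\ref{2.11}). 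That bound is useful only if $\sup_qM_q<\infty$, i.e.\ only if every contributing tower has positive mean index. This, not the finiteness of the $\beta_q$, is what Step 3 needs.

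\textbf{What fills it.} The missing input is geometric: the Bangert--Klingenberg theorem \cite{BK1983}, extended to Finsler metrics. Under your hypothesis there are only finitely many prime closed geodesics, since $d\mapsto d^{\operatorname{ord}(g)}$ is injective. The theorem then rules out a closed geodesic with $\hat i(c)=0$ having an iterate with nonzero local critical module. In the bumpy case this is quick: $\hat i(c)=0$ forces $i(c^m)=\nu(c^m)=0$ for all $m$. Hence every $S^1\cdot c^m$ is a nondegenerate local minimum of $E$, which is exactly the situation of \cite{BK1983} recalled in the introduction, contradicting finiteness. In the degenerate case you need the homological form of that theorem.

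\textbf{A smaller point.} You say in a parenthesis that, without bumpiness, invisibility of $c_j$ kills its whole tower ``by the analytic-period argument''. That is not correct for the definition used here, where nonvanishing is tested only at $c$ itself. Periodicity in $m$ transports vanishing only within a residue class of $m$; for instance, if $P_c$ has eigenvalue $-1$, then $\nu(c^2)>\nu(c)$ and the characteristic manifold changes. In the degenerate case you must define visibility tower-wise (some iterate has a nonzero module), which is the convention the resonance identity actually requires. In the bumpy case (\ref{invisible-tower}) already settles it.
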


The resonance identity above is used as a quoted result; its
Morse-series proof in \cite{Liu2019} applies with the full-isotropy
critical modules of Proposition~\ref{Rad1992'}. Invisible iteration
towers make no contribution by (\ref{invisible-tower}).
\begin{theorem}(cf. Sections 4 and 5 of \cite{Liu2019})
Suppose that there exist only finitely many
contractible minimal closed geodesics $\{c_j\}_{1\le j\le q}$ on a compact Finsler space form $(S^{2n+1}/\Gamma,F)$. Define
\be M_p=\sum_{1\le j\le q,\ m\ge 1}\dim \overline C_p(E,c_j^m;[e]),\qquad\forall\ p\in\Z. \lb{2.9}\ee
Then the following Morse inequality holds for every $p\in \N_0$,
\bea M_p &\ge& \beta_p,\lb{2.10}\\
M_p - M_{p-1} + \cdots +(-1)^{p}M_0
&\ge& \beta_p - \beta_{p-1}+ \cdots + (-1)^{p}\beta_0. 
\lb{2.11}\eea
\end{theorem}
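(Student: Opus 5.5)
The plan is to derive both inequalities from $S^1$-equivariant Morse theory for $E|_{\Lambda_e M}$, using the energy filtration of the pair $(\Lambda_e M/S^1,\Lambda^0M/S^1)$.

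\textbf{Setup of the filtration.} Since $\Lambda_e M$ is open and closed in $\Lambda M$, the energy $E$ restricted to $\Lambda_e M$ still satisfies the Palais--Smale condition. Its critical points of positive energy in $\Lambda_e M$ are exactly the orbits $S^1\cdot c_j^m$, where $1\le j\le q$ and $m\ge 1$. There are finitely many minimal ones, so the critical values $E(c_j^m)=m^2E(c_j)$ form a discrete set $0<a_1<a_2<\cdots\to\infty$, and each level contains only finitely many isolated critical orbits. Put $\Lambda_e^a=\{\gamma\in\Lambda_e M\mid E(\gamma)\le a\}$.

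For small $\varepsilon>0$, the sublevel $\Lambda_e^{\varepsilon}$ $S^1$-equivariantly deformation retracts onto $\Lambda^0M$. This holds because short loops contract along minimizing geodesics to constants, using a positive lower bound on the length of nonconstant closed geodesics. Hence $H_*(\Lambda_e^{\varepsilon}/S^1,\Lambda^0M/S^1)=0$.

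\textbf{Local contributions at each level.} Take a critical value $a=a_k$. The equivariant deformation lemma moves $\Lambda_e^{a+\delta}$ onto $\Lambda_e^{a-\delta}\cup\bigcup S^1\cdot c_j^m$, the union over the orbits at level $a$. Excision around these finitely many orbits then gives
$$H_*(\Lambda_e^{a+\delta}/S^1,\Lambda_e^{a-\delta}/S^1)\cong\bigoplus_{E(c_j^m)=a}\overline C_*(E,c_j^m;[e]).$$
Rational coefficients make the finite isotropy groups harmless when passing to $S^1$-quotients; this is the content of Proposition~\ref{Rad1992'}. Between consecutive critical values the homotopy type of the sublevel pair does not change.

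\textbf{Inequalities for a truncation.} Consider the finite filtration $\Lambda^0M\subset\Lambda_e^{\varepsilon}\subset\Lambda_e^{a_1+\delta}\subset\cdots\subset\Lambda_e^{a_N+\delta}$. Applying the long exact sequences of triples and the subadditivity of the Morse polynomial, as in the classical argument, yields for each $N$ and $p$:
$$M_p^N-M_{p-1}^N+\cdots\pm M_0^N\ \ge\ \beta_p^N-\beta_{p-1}^N+\cdots\pm\beta_0^N,$$
together with $M_p^N\ge\beta_p^N$. Here $M^N$ counts the critical modules of orbits with energy at most $a_N$, and $\beta^N_*$ are the Betti numbers of $(\Lambda_e^{a_N+\delta}/S^1,\Lambda^0M/S^1)$.

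\textbf{Passing to the limit (main obstacle).} The delicate step is passing from $\beta^N$ to $\beta$, because Betti numbers of sublevels need not converge monotonically. Fix $p$. If some $M_\ell$ with $\ell\le p+1$ is infinite, both inequalities are trivial: in (\ref{2.11}), $M_p$ enters with sign $+$, and if the first infinite one is $M_{p+1}$ the claim concerns lower degrees only, so we argue degree by degree. Otherwise only finitely many orbits have nonzero critical modules in degrees $\le p+1$, and we choose $N$ so large that all of them lie below $a_N$. Then all orbits above $a_N$ have $\overline C_\ell=0$ for $\ell\le p+1$, so by the same excision and exact-sequence argument $H_\ell(\Lambda_e M/S^1,\Lambda_e^{a_N+\delta}/S^1)=0$ for $\ell\le p+1$. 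We also use that singular homology commutes with the direct limit over sublevels.

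The long exact sequence of the triple then identifies $\beta_\ell^N=\beta_\ell$ for $\ell\le p$, and $M_\ell^N=M_\ell$ for $\ell\le p$. Substituting these into the truncated inequalities gives (\ref{2.10}) and (\ref{2.11}).
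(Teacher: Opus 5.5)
Your argument is the standard $S^1$-equivariant Morse-theoretic proof: an energy filtration of $(\Lambda_eM/S^1,\Lambda^0M/S^1)$, identification of each level pair with $\bigoplus\overline C_*(E,c_j^m;[e])$, subadditivity along a finite truncation, then stabilization of low-degree homology. The paper gives no proof of its own and cites Sections 4--5 of Liu's paper, which argue in the same way. When $M_\ell<\infty$ for all $\ell\le p+1$, your proof is correct. A minor point: the level-wise identification needs only equivariant deformation plus excision on the quotient. Rational coefficients and Proposition~\ref{Rad1992'} are needed to compute the local modules and the $\beta_j$, not for this step.

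The flaw is your treatment of infinite $M_\ell$ in the limiting step.

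\textbf{Infinite terms below degree $p$.} Suppose $M_\ell=\infty$ for some $\ell<p$ with $p-\ell$ odd. Then the left side of (\ref{2.11}) is $-\infty$ or undefined, so the inequality is not ``trivial'' but false or meaningless. This case has to be excluded by hypothesis. Finiteness of the $M_\ell$ is implicit in the statement, and in the paper's application it holds because $\hat i(c_j)>0$.

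\textbf{Infinite $M_{p+1}$ only.} Suppose $M_0,\dots,M_p<\infty$ but $M_{p+1}=\infty$. Then the degree-$p$ inequalities are meaningful, and ``argue degree by degree'' does not prove them. Your identification $\beta_p^N=\beta_p$ used $H_{p+1}(\Lambda_eM/S^1,\Lambda_e^{a_N+\delta}/S^1)=0$, which can now fail.

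\textbf{Repair.} Use surjectivity in the top degree instead of an isomorphism. Assume every orbit above $a_N$ has $\overline C_\ell(E,c_j^m;[e])=0$ for $\ell\le p$; this needs only $M_0,\dots,M_p<\infty$. Then $H_\ell(\Lambda_eM/S^1,\Lambda_e^{a_N+\delta}/S^1)=0$ for $\ell\le p$. The exact sequence of the triple gives $\beta_\ell^N=\beta_\ell$ for $\ell<p$ and a surjection in degree $p$, i.e.\ $\beta_p\le\beta_p^N$. Since $\beta_p$ enters both (\ref{2.10}) and (\ref{2.11}) with sign $+$, this suffices. For (\ref{2.10}) alone, the same argument needs only $M_p<\infty$.
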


\section{The enhanced common index jump theorem of symplectic paths}

In \cite{lo1999} of 1999, Y. Long established the basic normal form decomposition of symplectic matrices.
Based on this result he further established the precise iteration formulae of indices of
symplectic paths in \cite{lo2000} of 2000. Note that this index iteration formulae works for Morse indices
of iterated closed geodesics (cf. \cite{LL2002}, \cite{Liu2005} and Chap. 12 of \cite{lo2002}).
Since $M=S^{2n+1}/\Gamma$ is orientable, the normal bundle of every
closed geodesic on $M$ is orientable. Hence, by Theorem 1.1 of \cite{Liu2005},
the initial Morse index of a closed geodesic $c$ on a Finsler quotient manifold $S^{2n+1}/\Gamma$ coincides with the index of a
corresponding symplectic path.

As in \cite{lo2000}, denote by
\bea
N_1(\lm, b) &=& \left(
  \begin{array}{ll}\lm & b\cr
                                0 & \lm\\ \end{array}
  \right), \qquad {\rm for\;}\lm=\pm 1, \; b\in\R, \lb{3.1}\\
D(\lm) &=& \left(
  \begin{array}{ll}\lm & 0\cr
                      0 & \lm^{-1}\\ \end{array}
  \right), \qquad {\rm for\;}\lm\in\R\bs\{0, \pm 1\}, \lb{3.2}\\
R(\th) &=& \left(
  \begin{array}{ll}\cos\th & -\sin\th \cr
                           \sin\th & \cos\th\\ \end{array}
  \right), \qquad {\rm for\;}\th\in (0,\pi)\cup (\pi,2\pi), \lb{3.3}\eea
\bea N_2(e^{\th\sqrt{-1}}, B) &=& \left(
  \begin{array}{ll} R(\th) & B \cr
                  0 & R(\th)\\ \end{array}
  \right), \qquad {\rm for\;}\th\in (0,\pi)\cup (\pi,2\pi)\;\; {\rm and}\; \nn\\
        && \qquad B=\left(
  \begin{array}{ll}b_1 & b_2\cr
                                  b_3 & b_4\\ \end{array}
  \right)\; {\rm with}\; b_j\in\R, \;\;
                                         {\rm and}\;\; b_2\not= b_3. \lb{3.4}\eea
Here $N_2(e^{\th\sqrt{-1}}, B)$ is non-trivial if $(b_2-b_3)\sin\theta<0$, and trivial
if $(b_2-b_3)\sin\theta>0$.

As in \cite{lo2000}, the $\diamond$-sum (direct sum) of any two real matrices is defined by
\bea \left(
  \begin{array}{ll}A_1 & B_1\cr C_1 & D_1\\ \end{array}
  \right)_{2i\times 2i}\diamond
      \left(
  \begin{array}{ll}A_2 & B_2\cr C_2 & D_2\\ \end{array}
  \right)_{2j\times 2j}
=\left(
  \begin{array}{llll}A_1 & 0 & B_1 & 0 \cr
                                   0 & A_2 & 0& B_2\cr
                                   C_1 & 0 & D_1 & 0 \cr
                                   0 & C_2 & 0 & D_2\\ \end{array}
  \right).\lb{3.5}\eea

For every $M\in\Sp(2n)$, the homotopy set $\Omega(M)$ of $M$ in $\Sp(2n)$ is defined by
$$ \Om(M)=\{N\in\Sp(2n)\,|\,\sg(N)\cap\U=\sg(M)\cap\U\;\mbox{and}
                    \;\nu_{\om}(N)=\nu_{\om}(M)\, \forall\om\in\U \}, $$
where $\sg(M)$ denotes the spectrum of $M$, $\U=\{z\in \C:|z|=1 \}$, 
$\nu_{\om}(M)\equiv\dim_{\C}\ker_{\C}(M-\om I)$ for $\om\in\U$.
We denote by $\Om^0(M)$ the path connected component of $\Om(M)$ containing $M$.
\begin{lemma}(cf. \cite{lo2000}, Definition 9.1.4, Lemma 9.1.5 and List 9.1.12 of \cite{lo2002})
For $M\in\Sp(2n)$ and $\om\in\U$, the splitting number $S_M^\pm(\om)$ satisfies
\bea
S_M^{\pm}(\om) &=& 0, \qquad {\it if}\;\;\om\not\in\sg(M).  \lb{3.6}\\
S_{N_1(1,a)}^+(1) &=& \left\{
  \begin{array}{ll}1, &\quad {\rm if}\;\; a\ge 0, \cr
0, &\quad {\rm if}\;\; a< 0.  \end{array}\right.
  \lb{3.7}\eea

For any $M_i\in\Sp(2n_i)$ with $i=0$ and $1$, there holds
\be S^{\pm}_{M_0\diamond M_1}(\om) = S^{\pm}_{M_0}(\om) + S^{\pm}_{M_1}(\om),
    \qquad \forall\;\om\in\U. \lb{3.8}\ee\end{lemma}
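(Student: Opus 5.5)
The plan is to argue directly from the definition of the splitting numbers in \cite{lo2000} and Chap.~9 of \cite{lo2002}. For $M\in\Sp(2n)$ and $\om\in\U$, take any path $\gamma\in\P_\tau(2n)$ with $\gamma(\tau)=M$ and set
$$S_M^{\pm}(\om)=\lim_{\epsilon\to 0^+} i_{\om e^{\pm\sqrt{-1}\epsilon}}(\gamma)-i_\om(\gamma).$$
The first task is to recall why this is independent of $\gamma$. The key input is that for two paths with the same endpoint, the difference $i_\om(\gamma)-i_\om(\gamma')$ is given by a Maslov-type intersection count of a loop. That count does not depend on $\om$, so it cancels in the difference above. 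With this in hand, all three assertions become properties of the function $\om\mapsto i_\om(\gamma)$ for one conveniently chosen path.

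For (\ref{3.6}), I would use that $i_\om(\gamma)$ is locally constant in $\om$ on $\U\setminus\sg(M)$. The reason is that $i_\om(\gamma)$ is defined via intersections of $\gamma$ with the singular hypersurface $\Sp(2n)_\om^0=\{N\mid \det(N-\om I)=0\}$, with a correction at the endpoint. Moving $\om$ slightly changes nothing as long as $\gamma(\tau)=M$ stays off that hypersurface. More concretely, one can perturb $\om$ within a small arc free of eigenvalues of $M$; this gives a homotopy of the $\om$-boundary problems with fixed endpoint nullity $\nu_\om(M)=0$, so the index is constant on that arc. Hence both one-sided limits equal $i_\om(\gamma)$, and $S^\pm_M(\om)=0$.

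For the additivity (\ref{3.8}), I would choose $\gamma=\gamma_0\diamond\gamma_1$ with $\gamma_i(\tau)=M_i$. The $\om$-index is additive under the $\diamond$-sum: the index form of the $\om$-boundary value problem splits orthogonally, and the nullity splits as $\nu_\om(M_0\diamond M_1)=\nu_\om(M_0)+\nu_\om(M_1)$. Subtracting the additivity identity at $\om$ from the one at $\om e^{\pm\sqrt{-1}\epsilon}$ and letting $\epsilon\to0^+$ gives (\ref{3.8}).

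The main obstacle is (\ref{3.7}), which needs an actual computation.
\begin{itemize}
\item First, I would use that $S^\pm_M(\om)$ is constant on $\Om^0(M)$, by the same homotopy invariance as above. The set $\{N_1(1,a)\mid a>0\}$ lies in one component of $\Om(N_1(1,1))$, and similarly for $a<0$. So it suffices to treat $a=1$, $a=0$ and $a=-1$.
\item For these I would take explicit paths in $\Sp(2)$. For $a=0$, use the constant path $I_2$, whose $\om$-index is $1$ at $\om=1$ (nullity $2$, hence $i_1=-1$ after normalization) and $0$ nearby; this is the same computation as for $a\ge0$, and gives $S^+=1$.
\item For $a=\pm1$, use $\gamma(t)=\begin{pmatrix}1&\pm t\\0&1\end{pmatrix}$, $t\in[0,1]$. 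Then compute $i_\om(\gamma)$ for $\om$ near $1$ via the crossing form. The crossing form is $\mp$ definite at the single crossing in the $1$-dimensional kernel, and $\det(\gamma(t)-\om I)=(1-\om)^2\ne0$ for $\om\neq1$. This yields a jump of $1$ or $0$ exactly according to the sign of $a$.
\end{itemize}
Alternatively, I would simply quote List~9.1.12 of \cite{lo2002} for these normal forms, with the sign convention fixed so that $a\ge0$ gives $1$.
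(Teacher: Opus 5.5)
The paper gives no argument of its own here: the lemma is quoted from Definition 9.1.4, Lemma 9.1.5 and List 9.1.12 of Long's book. Your closing fallback of quoting List 9.1.12 is therefore exactly the paper's route. Your sketches of path-independence, of (\ref{3.6}) and of (\ref{3.8}) are the standard arguments behind Lemma 9.1.5 and are fine in outline. One caution: at $\om=1$ the initial point $\gamma(0)=I$ itself lies on $\Sp(2n)^0_1$. So local constancy of $\om\mapsto i_\om(\gamma)$ near $\om=1$ comes from your index-form (boundary-value-problem) formulation, with its normalization of $i_1$, and not from the intersection picture alone.

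Your self-contained computation of (\ref{3.7}) for $a=\pm1$ does not work as written. For $\gamma(t)=N_1(1,\pm t)$ one has $\det(\gamma(t)-I)=0$ for every $t\in[0,1]$. So at $\om=1$ the whole path lies in $\Sp(2)^0_1$, and there is no isolated crossing. Moreover, $\ker(\gamma(t)-I)=\R e_1$ for $t>0$ and $\dot\gamma(t)e_1=0$; indeed $-J\dot\gamma(t)\gamma(t)^{-1}={\rm diag}(0,\mp1)$. Hence the crossing form vanishes identically on the kernel, and it is degenerate on the $2$-dimensional kernel at $t=0$. It is not $\mp$-definite, so the crossing-form method gives no information. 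This is precisely the degenerate situation in which Long's endpoint convention must be invoked.

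Your other observation, $\det(\gamma(t)-\om I)=(1-\om)^2\ne0$ for $\om\ne1$, is the useful half. It gives $i_\om(\gamma)=0$ for $\om\ne1$, hence $S^+_{N_1(1,a)}(1)=-i_1(\gamma)$ for $\gamma(t)=N_1(1,at)$. To finish, use the rotated endpoint $X_\epsilon=e^{-\epsilon J}N_1(1,a)$ from Long's definition of $i_1$. The parity of $i_1(\gamma)$ is odd exactly when $X_\epsilon\in\Sp(2)^+=\{X:\det(X-I)>0\}$; for example, $i_1=-1$ for the constant path. Here $\det(X_\epsilon-I)=2-{\rm tr}\,X_\epsilon=2-2\cos\epsilon+a\sin\epsilon$, which has the sign of $a$ for small $\epsilon>0$. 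So $i_1(\gamma)$ is odd for $a>0$ and even for $a<0$.

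Combine this with the bound $0\le S^+_M(1)\le\nu_1(M)=1$. That bound is also part of Lemma 9.1.5 and follows from your index-form picture. It forces $i_1(\gamma)=-1$ for $a>0$ and $i_1(\gamma)=0$ for $a<0$, which is (\ref{3.7}). It also makes your reduction to $a\in\{1,0,-1\}$ unnecessary.

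Finally, in your $a=0$ item the $\om$-index at $\om=1$ is $i_1=-1$, not $1$; the jump is $0-(-1)=1$.
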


For every $\gamma\in\mathcal{P}_\tau(2n)\equiv\{\gamma\in C([0,\tau],\Sp(2n))\ |\ \gamma(0)=I_{2n}\}$, we extend
$\gamma(t)$ to $t\in [0,m\tau]$ for every $m\in\N$ by
\bea \gamma^m(t)=\gamma(t-j\tau)\gamma(\tau)^j \qquad \forall\;j\tau\le t\le (j+1)\tau \;\;
               {\rm and}\;\;j=0, 1, \ldots, m-1, \lb{3.9}\eea
as in P.114 of \cite{lo1999}. As in \cite{LoZ} and \cite{lo2002}, we denote the Maslov-type indices of
$\gamma^m$ by $(i(\gamma,m),\nu(\gamma,m))$.

Then the following decomposition theorem and precise iteration formula for symplectic paths are proved in \cite{lo1999}
and \cite{lo2000}.
\begin{theorem}\label{jqddgs}(cf. Theorem 1.8.10, Lemma 2.3.5 and Theorem 8.3.1 of \cite{lo2002})
For every $M\in\Sp(2n)$, there
exists a continuous path $f\in\Om^0(M)$ such that $f(0)=M$ and
\bea f(1)
&=& N_1(1,1)^{\dm p_-}\,\dm\,I_{2p_0}\,\dm\,N_1(1,-1)^{\dm p_+}
  \dm\,N_1(-1,1)^{\dm q_-}\,\dm\,(-I_{2q_0})\,\dm\,N_1(-1,-1)^{\dm q_+} \nn\\
&&\dm\,N_2(e^{\aa_{1}\sqrt{-1}},A_{1})\,\dm\,\cdots\,\dm\,N_2(e^{\aa_{r_{\ast}}\sqrt{-1}},A_{r_{\ast}})
  \dm\,N_2(e^{\bb_{1}\sqrt{-1}},B_{1})\,\dm\,\cdots\,\dm\,N_2(e^{\bb_{r_{0}}\sqrt{-1}},B_{r_{0}})\nn\\
&&\dm\,R(\th_1)\,\dm\,\cdots\,\dm\,R(\th_r)\dm\,D(\pm 2)^{\dm h},\lb{3.10}\eea
where
$\frac{\th_{j}}{2\pi}\in(0,1)$ for $1\le j\le r$; $N_2(e^{\aa_{j}\sqrt{-1}},A_{j})$'s
are nontrivial and $N_2(e^{\bb_{j}\sqrt{-1}},B_{j})$'s are trivial, and non-negative integers
$p_-, p_0, p_+,q_-, q_0, q_+,r,r_\ast,r_0,h$ satisfy
\be p_- + p_0 + p_+ + q_- + q_0 + q_+ + r + 2r_{\ast} + 2r_0 + h = n. \nn\ee

Let $\gamma\in\P_{\tau}(2n)=\{\gamma\in C([0,\tau],\Sp(2n))\,|\,\gamma(0)=I\}$. Denote the basic normal form
decomposition of $M\equiv \gamma(\tau)$ by (\ref{3.10}). Then we have
\begin{align}\lb{3.11}
i(\gamma, m) = & \ m(i(\gamma, 1) + p_- + p_0 - r) + 2 \sum_{j=1}^{r} \lceil \frac{m\theta_j}{2\pi} \rceil - r - p_- - p_0 \nonumber \\
& - \frac{1 + (-1)^m}{2} (q_0 + q_+) + 2 \left( \sum_{j=1}^{r_*} \varphi \left( \frac{m\alpha_j}{2\pi} \right) - r_* \right).  \\
\nu(\gamma, m) = & \ \nu(\gamma, 1) + \frac{1 + (-1)^m}{2} (q_- + 2q_0 + q_+) + 2(r + r_* + r_0) \nonumber \\
& - 2 \left( \sum_{j=1}^{r} \varphi \left( \frac{m\theta_j}{2\pi} \right) + \sum_{j=1}^{r_*} \varphi \left( \frac{m\alpha_j}{2\pi} \right) + \sum_{j=1}^{r_0} \varphi \left( \frac{m\beta_j}{2\pi} \right) \right)  \\
\hat{i}(\gamma, 1) = & \ i(\gamma, 1) + p_- + p_0 - r + \sum_{j=1}^{r} \frac{\theta_j}{\pi}. 
\end{align}
\end{theorem}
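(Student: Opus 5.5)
The plan has two stages: reduce to the basic normal form (\ref{3.10}) while staying inside $\Om^0(M)$, then compute the indices of iterates blockwise with Bott-type formulae and splitting numbers. The key point linking the stages is that the splitting numbers $S^\pm_M(\om)$, $\nu_\om(M)$ and the $\om$-indices $i_\om(\gamma)$ are constant on $\Om^0(M)$. So once the endpoint is deformed within $\Om^0(M)$, all data entering the iteration formula may be read off from $f(1)$.

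\emph{Normal form.} I will decompose $\C^{2n}$ symplectically into the generalized eigenspaces of $M$: those for eigenvalues off $\U$, which come in pairs $\lm,\lm^{-1}$, and those for eigenvalues on $\U$. On each summand I treat the blocks as follows.
\begin{itemize}
\item For the hyperbolic part, the eigenvalues can be moved off $\U$ without meeting it, and the Jordan structure can be destroyed. This part is therefore homotopic within $\Om$ to $D(\pm 2)^{\dm h}$, with the sign fixed by the sign of the real eigenvalues, which is a homotopy invariant.
\item For the eigenvalues $\pm1$, I use the classification of Jordan blocks together with the sign of the associated quadratic form. This reduces everything to $N_1(\pm1,\pm1)$ blocks and $\pm I$ blocks. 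Lemma 2.3.5 of \cite{lo2002} handles the case of higher Jordan blocks, which can be split inside $\Om^0$ without changing $\nu_{\pm1}$.
\item For $e^{\th\sqrt{-1}}$ with $\th\ne 0,\pi$, the Krein form gives a signature. Definite pieces become rotations $R(\th_j)$. Indefinite pairs become $N_2$ blocks, which are trivial or nontrivial according to the sign of $(b_2-b_3)\sin\th$.
\end{itemize}

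\emph{Iteration formula.} I will use Bott's formula
$$i(\gamma,m)=\sum_{\om^m=1} i_\om(\gamma),\qquad \nu(\gamma,m)=\sum_{\om^m=1}\nu_\om(M),$$
together with
$$i_\om(\gamma)=i(\gamma,1)+S^+_M(1)+\sum_{0<\th<\arg\om}\bigl(S^+_M(e^{\th\sqrt{-1}})-S^-_M(e^{\th\sqrt{-1}})\bigr)-S^-_M(\om).$$
By (\ref{3.8}) and homotopy invariance, it suffices to know the splitting numbers of each basic block; these are listed in List 9.1.12 of \cite{lo2002}. For example, $S^\pm_{R(\th)}(e^{\th\sqrt{-1}})$ equals $(0,1)$, and $S^+_{N_1(1,a)}(1)$ is given by (\ref{3.7}). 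Summing over the $m$-th roots of unity, each block contributes as follows.
\begin{itemize}
\item $R(\th_j)$ contributes $2\lceil m\th_j/2\pi\rceil-1$ minus the linear term, which produces the $\sum_j\lceil\cdot\rceil - r$ part.
\item The $-1$ blocks contribute only when $m$ is even, giving the $\frac{1+(-1)^m}{2}(q_0+q_+)$ term.
\item Nontrivial $N_2$ blocks contribute $2(\varphi(m\aa_j/2\pi)-1)$.
\item $N_1(1,1)$ and $I_2$ blocks yield $p_-+p_0$ in both the linear term and the constant.
\end{itemize}
The nullity formula follows by counting $\dim\ker(f(1)^m-I)$ block by block. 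For the mean index, I divide by $m$ and let $m\to\infty$, using $\lceil m\th/2\pi\rceil/m\to\th/2\pi$; boundedness of the $\varphi$ terms makes them disappear.

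The main obstacle is the first stage, namely proving that the deformation to the normal form can be carried out inside the path component $\Om^0(M)$. The difficult cases are indefinite Krein signature at non-real unit eigenvalues, which give the $N_2$ blocks, and degenerate Jordan blocks at $\pm1$. Here I would follow Long's approach: first deform the hyperbolic part away, then perturb within a fixed $\om$-eigenspace using the symplectic Jordan normal form, checking at each step that $\nu_\om$ is unchanged for every $\om\in\U$.
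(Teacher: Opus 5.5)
Your two-stage plan is the standard argument behind this statement. The paper gives no proof of its own and simply quotes Long's book, whose proof has the same structure. Your second stage is correct. Bott's formula, together with the expression of $i_\omega(\gamma)$ through $i(\gamma,1)$ and $S^\pm_M$, uses only data that are constant on $\Omega^0(M)$ and additive under $\diamond$. Your block contributions agree with List 9.1.12:
\begin{itemize}
\item $2\lceil m\theta_j/2\pi\rceil-1-m$ for $R(\theta_j)$;
\item $m-1$ for each $N_1(1,1)$ or $I_2$;
\item $-1$ at $\omega=-1$ for each $-I_2$ or $N_1(-1,-1)$;
\item $-2$ or $0$ for a nontrivial $N_2$.
\end{itemize}
These give the index, nullity and mean index formulas.

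The step that would fail is your normal-form sketch at $e^{\theta\sqrt{-1}}$ with $\theta\ne0,\pi$. The dividing line there is semisimple versus non-semisimple, not Krein-definite versus indefinite. Since $\Omega^0(M)$ preserves $\nu_\omega$ for every $\omega\in\mathbf{U}$, consider a semisimple but Krein-indefinite piece such as $R(\theta)\diamond R(2\pi-\theta)$. It has $\nu_{e^{\theta\sqrt{-1}}}=2$, so it can never be deformed inside $\Omega^0(M)$ into an $N_2$ block, which has $\nu_{e^{\theta\sqrt{-1}}}=1$.

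What is true is the following. The semisimple part, definite or not, becomes a product of $R(\theta)$'s and $R(2\pi-\theta)$'s, with multiplicities given by the Krein signature; this is why the statement allows $\theta_j/2\pi\in(0,1)$. Only nontrivial Jordan chains can produce $N_2$ blocks. Chains of length $\ge3$ are reduced to an $R$ or $N_2$ block plus hyperbolic factors. This is allowed because $\Omega(M)$ fixes only $\sigma(M)\cap\mathbf{U}$ and the $\nu_\omega$, not algebraic multiplicities.

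Note that $R(\theta)\diamond R(2\pi-\theta)$ and a nontrivial $N_2(e^{\theta\sqrt{-1}},B)$ have the same splitting numbers $(1,1)$ at $e^{\theta\sqrt{-1}}$. So your index formula would survive this mistake, but the normal form itself, the counts $r,r_*$, and the nullity formula would not.

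A smaller inaccuracy concerns the hyperbolic part: the sign of an individual real hyperbolic eigenvalue is not a homotopy invariant. For example, $D(2)\diamond D(2)$ and $D(-2)\diamond D(-2)$ are joined by the hyperbolic path $\mathrm{diag}(2R(t),\frac12R(t))$, $0\le t\le\pi$, where $R(t)$ is the rotation matrix. Only the parity of the number of negative pairs is invariant. This is harmless for the formulas, since $D(\pm2)$ has no splitting numbers on $\mathbf{U}$.
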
 

The common index jump theorem (cf. Theorem 4.3 of \cite{LoZ}) for symplectic paths established by Long
and Zhu in 2002 has become one of the main tools to study the multiplicity and stability problems of
closed solution orbits in Hamiltonian and symplectic dynamics. Recently, the following enhanced version
of it has been obtained by Duan, Long and Wang in \cite{DLW2}, which will play an important role in the
proofs.
\begin{theorem}\label{ECIJT}(\textit{cf. Theorem 3.5 of \cite{DLW2}}) (\textbf{The enhanced common index jump theorem for symplectic paths})
    Let $\{\gamma_k\}_{k=1}^q$, with $\gamma_k \in \mathcal{P}_{\tau_k}(2n)$ for each $k$, be a finite collection of symplectic paths.
    Let $M_k = \gamma_k(\tau_k)$. We extend $\gamma_k$ to $[0, +\infty)$ by (\ref{3.9}) inductively. Suppose
\begin{equation}
\hat{i}(\gamma_k, 1) > 0, \quad \forall k = 1, \dots, q. 
\end{equation}

\noindent Then for every integer $\bar{m} \in \mathbb{N}$, there exist infinitely many $(q + 1)$-tuples $(N, m_1, \dots, m_q) \in \mathbb{N}^{q+1}$ such that for all $1 \le k \le q$ and $1 \le m \le \bar{m}$, there holds
\begin{align}
\nu(\gamma_k, 2m_k - m) &= \nu(\gamma_k, 2m_k + m) = \nu(\gamma_k, m),\lb{3.15}  \\
i(\gamma_k, 2m_k + m) &= 2N + i(\gamma_k, m),\lb{3.16}  \\
i(\gamma_k, 2m_k - m) &= 2N - i(\gamma_k, m) - 2(S_{M_k}^+(1) + Q_k(m)),\lb{3.17}  \\
i(\gamma_k, 2m_k) &= 2N - (S_{M_k}^+(1) + C(M_k) - 2\Delta_k),\lb{3.18} 
\end{align}
where $C(M_k)=\sum_{0<\theta<2\pi}S^{-}_{M_k}(e^{\sqrt{-1}\theta})$, letting $\delta \in (0,\frac{1}{2})$ which satisfies
$\delta \max_{1\leq k \leq q} C(M_k)<\frac{1}{2}$, 
\begin{equation}\lb{3.19}
\Delta_k = \sum_{0 < \{m_k\theta/\pi\} < \delta} S_{M_k}^-(e^{\sqrt{-1}\theta}), \quad Q_k(m) = \sum_{\substack{e^{\sqrt{-1}\theta} \in \sigma(M_k), \\ \{\frac{m_k\theta}{\pi}\} = \{\frac{m\theta}{2\pi}\} = 0}} S_{M_k}^-(e^{\sqrt{-1}\theta}). 
\end{equation}
More precisely, by (4.10), (4.40) and (4.41) in \cite{lo2002}, we have
\begin{equation}\lb{3.20}
m_k = \left( \left\lfloor \frac{N}{\overline M\hat{i}(\gamma_k, 1)} \right\rfloor + \chi_k \right) \overline M, \quad 1 \le k \le q, 
\end{equation}
where $\chi_k = 0$ or $1$ for $1 \le k \le q$ and $\frac{\overline M\theta}{\pi} \in \mathbb{Z}$ whenever $e^{\sqrt{-1}\theta} \in \sigma(M_k)$ and $\frac{\theta}{\pi} \in \mathbb{Q}$ for some $1 \le k \le q$. Furthermore, given $M_0 \in \mathbb{N}$, by (iv) of Remark 3.6 of \cite{DLW2}, we may further require $M_0 | N$, and by (4.20) in Theorem 4.1 of \cite{LoZ}, for any $\epsilon > 0$, we can choose $N$ and $\{\chi_k\}_{1 \le k \le q}$ such that
\begin{equation}\lb{3.21}
\left| \left\{ \frac{N}{\overline M\hat{i}(\gamma_k, 1)} \right\} - \chi_k \right| < \epsilon, \quad 1 \le k \le q. 
\end{equation}
In particular, if $\nu(\gamma_k, m)=0$ for all $1\leq k\leq q$ and $m\in\N$, by (\ref{3.6}) and (\ref{3.19}) we have $S^+_{M_k}(1)=0$ and $Q_k(m)=0$, $\forall\ m\ge 1$,
then (\ref{3.16})-(\ref{3.18}) have the following simple forms
\begin{align}
    i(\gamma_k,2m_k+m)&=2N+i(\gamma_k,m),   \\                      
i(\gamma_k,2m_k-m)&=2N-i(\gamma_k,m),  \\
i(\gamma_k, 2m_k)&=2N -C(M_k)+2\Delta_k.    
\end{align}
\end{theorem}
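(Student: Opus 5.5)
The statement above is quoted from \cite{DLW2}. I would not reprove it in full; the plan is to recover it from the precise iteration formula of Theorem~\ref{jqddgs} together with a simultaneous Diophantine approximation.

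\emph{Step 1: rewrite the iteration formula in splitting-number form.} By the homotopy invariance of $(i,\nu)$ on $\Om^0(M_k)$, I may replace each $M_k$ by its basic normal form (\ref{3.10}). From (\ref{3.11}) and the splitting numbers listed in Lemma~3.1 I would derive Long's formula in the shape
\[
i(\gamma_k,m)=m\big(i(\gamma_k,1)+S^+_{M_k}(1)-C(M_k)\big)
+2\sum_{\theta\in(0,2\pi)}E\Big(\frac{m\theta}{2\pi}\Big)S^-_{M_k}(e^{\sqrt{-1}\theta})
-(S^+_{M_k}(1)+C(M_k))-\frac{1+(-1)^m}{2}\,\text{(contribution at }-1),
\]
where $E(a)=\lceil a\rceil$, and the nonzero-$b$ $N_1$/$N_2$ blocks are handled through $\varphi$. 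Then
\[
\hat i(\gamma_k,1)=i(\gamma_k,1)+S^+_{M_k}(1)-C(M_k)+\sum_\theta \frac{\theta}{\pi}S^-_{M_k}(e^{\sqrt{-1}\theta}).
\]
Substituting $2m_k\pm m$ reduces everything to understanding
\[
E\Big(\frac{(2m_k\pm m)\theta}{2\pi}\Big)=E\Big(\frac{m_k\theta}{\pi}\pm\frac{m\theta}{2\pi}\Big)
\]
for every eigenangle $\theta$, and comparing the linear part $2m_k\hat i(\gamma_k,1)$ with $2N$.

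\emph{Step 2: choose $N$ and the $m_k$.} Fix $\overline M$ so that $\overline M\theta/\pi\in\Z$ for all rational $\theta/\pi$, and set $m_k$ as in (\ref{3.20}). Consider the vector
\[
v=\Big(\frac{1}{\overline M\hat i(\gamma_k,1)}\Big)_k\ \text{together with}\ \Big(\frac{\theta}{\pi\hat i(\gamma_k,1)}\Big)_{k,\theta\ \text{irrational}}
\]
in a torus. Following Theorem~4.1 of \cite{LoZ}, the closure of $\{Nv\}_{N\in\N}$ is a subtorus, and it contains points arbitrarily close to a target whose coordinates are all near $0$ or near $1$. This gives infinitely many $N$, with $M_0\mid N$ obtained by passing to $M_0v$, such that (\ref{3.21}) holds and $\{m_k\theta/\pi\}\in[0,\epsilon)\cup(1-\epsilon,1)$ for all irrational $\theta$. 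For rational $\theta$, $m_k\theta/\pi$ is an integer automatically.

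\emph{Step 3: read off the identities.} For $1\le m\le\bar m$ and $\epsilon$ small relative to $\bar m$, $E$ commutes with the integer shift $m_k\theta/\pi$ up to the near-integer error. This yields
\[
i(\gamma_k,2m_k+m)=2N+i(\gamma_k,m)
\]
and its mirror (\ref{3.17}), with the correction $Q_k(m)$ arising exactly when both $m_k\theta/\pi$ and $m\theta/2\pi$ are integers. The nullity statements (\ref{3.15}) follow the same way from the $\varphi$-terms. For $m=0$, the eigenvalues with $\{m_k\theta/\pi\}\in(0,\delta)$ each push $E$ up by one, and this produces the term $2\Delta_k$ in (\ref{3.18}). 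The condition $\delta\max_k C(M_k)<\tfrac12$ ensures that the linear error $|2m_k\hat i(\gamma_k,1)-2N|$ together with these fractional contributions still rounds to the stated integer. The bumpy simplification is then immediate from (\ref{3.6}).

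\emph{Main obstacle.} The hardest step is Step~2. The approximation must control $N/(\overline M\hat i_k)$ and all the $m_k\theta/\pi$ simultaneously, even though $m_k$ itself depends on $N$. It must also track on which side of an integer each fractional part lands, because that side determines $\Delta_k$. I would handle this by applying the dynamical (Kronecker-type) argument of \cite{LoZ} to the combined vector $v$, rather than by approximating each coordinate separately.
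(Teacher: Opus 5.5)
Your outline is correct and follows the same route as the source the paper relies on. The paper gives no proof of this statement; it quotes it from Duan--Long--Wang, who derive it as you propose, from Long's precise iteration formula in splitting-number form together with the Long--Zhu simultaneous approximation. The jump identities then follow because $2m_k\hat i(\gamma_k,1)-2N$, minus twice the $S^-_{M_k}$-weighted deviations of the numbers $m_k\theta/\pi$ from their nearest integers, is an integer of modulus less than $1$, hence zero.

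One small slip: the eigenvalue $-1$ is already the $\theta=\pi$ term of your sum over $(0,2\pi)$. For the normal form (\ref{3.10}) one has $S^-_{M_k}(-1)=q_0+q_+$, which by itself reproduces the parity term of (\ref{3.11}). So the separate parity term in your Step~1 formula double counts it. Once you drop it, the $-1$ contribution to (\ref{3.17}) enters correctly through $Q_k(m)$ for even $m$.
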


\section{Proof of Theorem 1.1}
If there are infinitely many contractible minimal closed geodesics,
there is nothing to prove. Hence we make the following assumption.

{\bf (FCCG)} There exist only finitely many contractible minimal
closed geodesics $c_1,\ldots,c_q$ on $(S^{2n+1}/\Gamma,F)$, and
$i(c_j)\ge1$ for $1\le j\le q$.

After relabeling, let $c_1,\ldots,c_r$ be the homologically visible
ones. Proposition~\ref{Rad1992'} shows that, for $j>r$, the entire
iteration tower of $c_j$ has zero local critical modules. In
particular, invisible towers make no contribution to the Morse-type
numbers or to the resonance sum in Theorem 2.1. We put $\gamma_{c_j}=0$ for $j>r$.

Note that we have $\hat{i}(c_j)>0$ for all
$1\le j\le q$, otherwise there is a contractible minimal closed geodesic
$c$ on $M$ satisfying $\hat{i}(c)=0$. Then $i(c^m) =0$ for all $m\in \N$
by the Bott iteration formula, which contradicts $i(c_j)\geq 1$ for all
$1\le j\le q$. Define the finite positive integer $\bar{m}$ by
\begin{equation}
    \bar{m}=\max_{1\le j\le q}\left\{\min\{m_0\in\N\ |\ i(c_j^{m+\ell})\ge i(c_j^\ell), \quad
    \forall\ \ell\ge 1, m\geq m_0\}\right\}. \lb{4.1}
\end{equation}

Let $\overline M$ be the integer in (\ref{3.20}), and fix
\[
0<\epsilon<\frac{1}{1+2\overline M
\sum_{j=1}^{q}|\gamma_{c_j}|}.
\]
By Theorem 3.2, for the integer $\bar{m}$ defined in (\ref{4.1}),
we choose a $(q+1)$-tuple
$(N,m_1,\dots,m_q)\in\mathbb N^{q+1}$ such that
$n\mid N$, $N\ge2n$, (\ref{3.21}) holds for this $\epsilon$, and,
for all $1\le j\le q$, the following hold:
\bea
\bar{m}+2 &\le& \min\{2m_j,\ 1\le j\le q\},\lb{ECIJT1}\\
i(c_j^{2m_j-m}) &=& 2N-i(c_j^m),\quad 1\le m\le\bar{m}, \lb{ECIJT2}\\
i(c_j^{2m_j}) &=& 2N-C(P_j)+2\Delta_j,\lb{ECIJT3}\\
i(c_j^{2m_j+m})&=& 2N+i(c_j^m),\quad 1\le m\le\bar{m},\lb{ECIJT4}\eea
where $P_j=P_{c_j}\in\Sp(4n)$ is the linearized Poincar\'{e} map of $c_j$.
Note that $\nu(c_j^m) = 0$ for all $1 \le j \le q$ and $m \in \mathbb{N}$ due to the bumpy assumption.

To establish the uniform index bounds, we estimate $i(c_j^{2m_j \pm m})$ by dividing the iteration $m$ into two ranges:

{\bf{Case 1:}} $1 \le m \le \bar{m}$.

By the Bott-type formulae (cf. \cite{Bott1956} and Theorem 9.2.1 of \cite{lo2002}),
$i(c_j^m) \ge i(c_j)$ for any $m \ge 1$. Substituting this into (\ref{ECIJT2}) and (\ref{ECIJT4}) yields:
\bea
i(c_j^{2m_j-m}) &=& 2N-i(c_j^m)\leq 2N-i(c_j), \quad 1\le m\le\bar{m}, \lb{mbzbgj1}\\
i(c_j^{2m_j+m})&=& 2N+i(c_j^m)\geq 2N+i(c_j), \quad 1\le m\le\bar{m}. \lb{mbzbgj2}\eea
{\bf{Case 2:}} $m\ge\bar{m}+1$.

By the definition of $\bar{m}$ in (\ref{4.1}),
we can naturally bound the indices using the limits from the previous range:
\bea
i(c_j^{2m_j-m}) &\leq& i(c_j^{2m_j-1})= 2N-i(c_j),\lb{mbzbgj3}\\
i(c_j^{2m_j+m})&\geq& i(c_j^{2m_j+1})= 2N+i(c_j).\lb{mbzbgj4}\eea
Here the first inequality is used for $\bar{m}+1\le m<2m_j$,
whereas the second is valid for every $m\ge\bar{m}+1$.
Combining Case 1 and Case 2, we immediately obtain the desired estimates for all $1 \le j \le q$:
\bea
i(c_j^{2m_j-m}) &\leq& 2N-i(c_j),\quad \forall \ 1\leq m< 2m_j, \lb{zbgj1}\\
i(c_j^{2m_j+m})&\geq& 2N+i(c_j),\quad \forall \ m\geq 1.\lb{zbgj2}\eea

Before stating Lemma 4.1, we define the following notation:
\begin{align}
k_+^e &= {}^{\#}\{1 \leq j \leq q \mid i(c_j^{2m_j}) \geq k+1,\,
\dim\overline C_{i(c_j^{2m_j})}(E,c_j^{2m_j};[e])=1,\, i(c_j) \in 2\mathbb{Z}\},  \\
k_+^o &= {}^{\#}\{1 \leq j \leq q \mid i(c_j^{2m_j}) \geq k+1,\,
\dim\overline C_{i(c_j^{2m_j})}(E,c_j^{2m_j};[e])=1,\, i(c_j) \in 2\mathbb{Z}+1\}.
\end{align}

As noted in \cite{Rad1989}, \cite{Rad1992} and Section 2, the resonance identity and the Betti numbers of the loop space for spheres
coincide with those of quotient manifold. Therefore, we obtain the following lemma.

\begin{lemma}
    Under the assumption (FCCG), with the above $\bar{m}$,
    there exists a $(q+1)$-tuple
    \[
    (N, m_1, \dots, m_q) \in \mathbb{N}^{q+1}
    \]
    such that the following inequality holds:
    \begin{equation} \lb{jogj}
        (2N - 1)_+^e - (2N - 1)_+^o \geq n+2.
    \end{equation} 
\end{lemma}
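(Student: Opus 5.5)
The plan is to follow the Duan--Xie argument on spheres and combine the Morse inequality (\ref{2.11}) at level $p=2N-1$ with the resonance identity (\ref{breident1}). The index estimates (\ref{zbgj1})--(\ref{zbgj2}) say that in degrees $\le 2N-1$ only the iterates $c_j^m$ with $m<2m_j$, plus possibly $c_j^{2m_j}$, can contribute to $M_p$. Iterates with $m>2m_j$ have index $\ge 2N+1$ and are excluded. Iterates with $m<2m_j$ all have index $\le 2N-1$.

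First I evaluate the alternating Morse sum
\[
\sum_{p=0}^{2N-1}(-1)^{p}M_p .
\]
Using (\ref{visible-tower}) and the period-$2$ property $a_j(m+2)=a_j(m)$, the contribution of $c_j$ from $m=1,\dots,2m_j-1$ is $2m_j\gamma_{c_j}-a_j(2m_j)$, up to the overall sign $(-1)^{2N-1}=-1$. Since $m_j$ is even (recall $\overline M\mid m_j$ and $\overline M$ may be taken even), $a_j(2m_j)=a_j(2)$.

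The iterate $c_j^{2m_j}$ enters the sum exactly when $i(c_j^{2m_j})\le 2N-1$, with sign $(-1)^{i(c_j^{2m_j})}$. Its index has the parity of $i(c_j^2)$, and one checks that $i(c_j^2)$ has the parity of $i(c_j)$ exactly when $c_j^2$ is visible with nonzero module. So the terms with $i(c_j^{2m_j})\ge 2N$ are the ones that are removed, and these are counted by $(2N-1)^e_+$ and $(2N-1)^o_+$ with opposite signs. This is where the difference $(2N-1)^e_+-(2N-1)^o_+$ comes from.

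Next I use (\ref{3.20}) and (\ref{3.21}) to write
\[
2m_j=\frac{2N}{\hat i(c_j)}+2\overline M\eta_j,\qquad |\eta_j|<\epsilon .
\]
Summing and applying (\ref{breident1}) gives
\[
\sum_j 2m_j\gamma_{c_j}=2N\cdot\frac{n+1}{2n}+\text{error},
\]
with error bounded by $2\overline M\epsilon\sum_j|\gamma_{c_j}|<1$.

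On the Betti side, (\ref{ksfzbts}) with $n\mid N$ and $N\ge 2n$ gives
\[
\sum_{p=0}^{2N-1}(-1)^{p}\beta_p \;=\; \text{(number of even degrees in }[2n,2N-1]\text{)}+\#(\K\cap[0,2N-1]) \;=\; (N-n)+\Big(\frac{N}{n}-2\Big).
\]
Then
\[
N\frac{n+1}{n}-(N-n)-\frac{N}{n}+2 \;=\; n+2,
\]
so the leading $N$-terms cancel exactly, as they must.

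Substituting into (\ref{2.11}) at $p=2N-1$ (odd), so that the inequality reads $-\sum_{p}(-1)^pM_p\ge-\sum_p(-1)^p\beta_p$, and rearranging yields
\[
(2N-1)^e_+-(2N-1)^o_+\;\ge\; n+2-\text{(error)}-\text{(}a_j(2)\text{ boundary terms)} .
\]
Integrality then absorbs the error term of size $<1$.

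I expect the main obstacle to be exactly this bookkeeping of the boundary terms. The $a_j(2m_j)$ corrections and the sign conventions relating visibility of $c_j^{2m_j}$ to the parity of $i(c_j)$ must combine into precisely the $\pm$ counts defining $k_+^e,k_+^o$. If they do not combine cleanly, I would instead split the count as $-\sum_j a_j(2)\cdot\mathbf 1\{i(c_j^{2m_j})\le 2N-1\}$ and verify case by case, using (\ref{ECIJT3}) and $C(P_j)\le 2n$ together with $2\Delta_j\le C(P_j)$, that each remaining term has the correct sign. A secondary point is ensuring $m_j$ is even so that $a_j(2m_j)=a_j(2)$; this I would enforce by choosing $\overline M$ even.
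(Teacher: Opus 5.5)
Your proposal is correct and is essentially the paper's proof: splitting the alternating Morse sum at degree $2N-1$ via (\ref{zbgj1})--(\ref{zbgj2}) is exactly (\ref{4.16}), and the ECIJT error estimate, integrality and the Betti count $N\frac{n+1}{n}-(n+2)$ are (\ref{4.17})--(\ref{betti_bound}). The closing hedge is unnecessary: your third paragraph already shows that the $c_j^{2m_j}$ terms with $i(c_j^{2m_j})\ge 2N$ are precisely $(2N-1)^e_+-(2N-1)^o_+$ with no residual boundary terms, and $a_j(2m_j)=a_j(2)$ holds by $2$-periodicity because $2m_j$ is even, so no parity condition on $m_j$ or $\overline M$ is needed.
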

{\bf{Proof}:} Define
\be
a_j(m)=(-1)^{i(c_j^m)}
\dim\overline C_{i(c_j^m)}(E,c_j^m;[e]). \lb{local-euler-contribution}
\ee
For a visible $c_j$, Proposition~\ref{Rad1992'} and (\ref{3.11})
give $a_j(m+2)=a_j(m)$. For an invisible $c_j$, the same identity
holds because $a_j(m)=0$ for every $m$. Consequently,
\begin{equation}
\begin{aligned}
\sum_{m=1}^{2m_j}a_j(m)
&=m_j\bigl(a_j(1)+a_j(2)\bigr)\\
&= 2m_j \gamma_{c_j}, \qquad \forall 1 \le j \le q,
\end{aligned} \lb{jch}
\end{equation}
where $\gamma_{c_j}$ is defined by (\ref{gamma-definition}) for a
visible geodesic and is zero for an invisible one.
Since $i(c_j^m)\ge i(c_j)\ge1$, we also have $M_0=0$.
Summing this over all $q$ contractible minimal closed geodesics,
and using the index bounds (\ref{zbgj1}) and (\ref{zbgj2}), we cut off the total sum at the index $2N - 1$ to get:
\begin{equation}
\sum_{j=1}^q\sum_{m=1}^{2m_j} (-1)^{i(c_j^m)} \dim \overline C_{i(c_j^m)}(E, c_j^m;[e]) = \sum_{i=1}^{2N-1} (-1)^i M_i + (2N-1)_+^e - (2N-1)_+^o.
\lb{4.16}
\end{equation}
We claim that the sum on the right-hand side of (\ref{jch}), taken
over $j$, is
\begin{equation}
\sum_{j=1}^q 2m_j \gamma_{c_j} = 2N \overline{B}(\Lambda_e M, \Lambda^0 M; \mathbb{Q}) = 2N \left( \frac{n+1}{2n} \right) = N \frac{n+1}{n}.
\lb{4.17}
\end{equation}
After relabeling, $c_1,\ldots,c_r$ are the visible geodesics and
$\gamma_{c_j}=0$ for $j>r$. Hence Theorem 2.1 gives the same
resonance sum with the upper limit $q$. Together with
(\ref{3.20})--(\ref{3.21}) of Theorem 3.2, this yields
\begin{equation}
\begin{aligned}
\left| 2N \overline{B}(\Lambda_e M, \Lambda^0 M; \mathbb{Q}) - \sum_{j=1}^q 2m_j \gamma_{c_j} \right|
&= \left| \sum_{j=1}^q \frac{2N \gamma_{c_j}}{\hat{i}(c_j)} - \sum_{j=1}^q 2\gamma_{c_j} \left( \left\lfloor \frac{N}{\overline M\hat{i}(c_j)} \right\rfloor + \chi_j \right) \overline M \right| \\
&\le 2\overline M \sum_{j=1}^q |\gamma_{c_j}| \left| \left\{ \frac{N}{\overline M\hat{i}(c_j)} \right\} - \chi_j \right| \\
&< 2\overline M \epsilon \sum_{j=1}^q |\gamma_{c_j}| \\
&< 1,
\end{aligned} \lb{4.18}
\end{equation}
by the above choice of $\epsilon$.
Since every $2m_j \gamma_{c_j}$ is an integer,
the identity (\ref{4.17}) holds exactly by (\ref{4.18}) and (\ref{reident1}).

On the other hand, Theorem 2.2 and Proposition 2.2 imply that
$\beta_i=0$ for all odd $i$ and $\beta_0=0$. Hence
\begin{equation} \label{betti_bound}
\begin{aligned}
\sum_{i=1}^{2N-1} (-1)^i M_i &\le \sum_{i=1}^{2N-1} (-1)^i \beta_i = \sum_{i=0}^{2N-1} \beta_i \\
&= \left\lfloor \frac{2N-1}{2n} \right\rfloor + \left\lfloor \frac{2N-1}{2} \right\rfloor - n \\ 
&= N\frac{n+1}{n} - (n+2),
\end{aligned}
\end{equation}
where the first equality holds since $\beta_i=0$ for all odd $i$ and $\beta_0=0$,
and the last equality holds since $n\mid N$ and $N\ge2n$. 
Now the inequality (\ref{jogj}) follows from (\ref{jch}), (\ref{4.16}), (\ref{4.17}) and (\ref{betti_bound}). \hfill $\square$

To prove Theorem 1.1, we need the following lemma.
\begin{lemma}(cf. Theorem 1.2 of \cite{Liu2019})
Let $M=S^{2n+1}/\Gamma$ be endowed with a bumpy Finsler metric $F$
carrying only finitely many contractible minimal closed geodesics
$c_1,\ldots,c_q$. If $i(c_j)\ge2$ for every $1\le j\le q$, then
$q\ge2n+2$.
\end{lemma}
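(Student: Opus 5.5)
The plan is to run the same Morse-theoretic counting as in Lemma 4.1, exploiting that $i(c_j)\ge2$ gives a wider gap around level $2N$. I will use the index bounds (\ref{zbgj1}) and (\ref{zbgj2}) with $i(c_j)\ge2$:
\[
i(c_j^{2m_j-m})\le 2N-2 \quad (1\le m<2m_j), \qquad i(c_j^{2m_j+m})\ge 2N+2 \quad (m\ge1).
\]
Hence in the degrees $2N-1$, $2N$ and $2N+1$ the only possible contributors to $M_{2N-1}$, $M_{2N}$, $M_{2N+1}$ are the iterates $c_j^{2m_j}$. Each of these contributes at most one to a single degree, by (\ref{visible-tower}) and (\ref{invisible-tower}). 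Since the metric is bumpy, (\ref{ECIJT3}) gives $i(c_j^{2m_j})=2N-C(P_j)+2\Delta_j$, and the choice of $\delta$ in Theorem 3.2 forces $2\Delta_j\le C(P_j)$. So each $c_j^{2m_j}$ lands in a degree at most $2N$. Moreover $C(P_j)\le 2n$, since $P_j\in\Sp(4n)$.

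The idea is that if the degrees $\le 2N$ near the top are forced to carry a lot of homology, we get many geodesics. Choose $N$ as in Lemma 4.1 with $n\mid N$. Then $2N\in\K$ whenever $N/n\ge2$, so $\beta_{2N}=2$. Because $M_{2N+1}=0$, the Morse inequality (\ref{2.11}) at $p=2N+1$ combined with the equality of Euler characteristics from the resonance computation (\ref{4.16})–(\ref{4.17}) sharpens the inequalities. The relevant counting is the following. Let $A$ be the number of $j$ with $i(c_j^{2m_j})\ge 2N-1$ and nonzero critical module. The alternating sum up to $2N-1$ is controlled as in (\ref{betti_bound}). The extra contribution at degree $2N$ yields $M_{2N}\ge\beta_{2N}=2$, and a refined version of (\ref{jogj}) gives roughly $(2N-1)^e_+-(2N-1)^o_+\ge n+2$, counting only those $c_j^{2m_j}$ with index in $\{2N-1,2N\}$.

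To reach $2n+2$ rather than $n+2$, I would use a second choice of tuple. Replace $N$ by some $N'$ with $n\nmid N'$ but still satisfying (\ref{3.21}), so that $2N'\notin\K$ and the floor term $\lfloor (2N'-1)/2n\rfloor$ shifts. Alternatively, I would combine the inequality at level $2N-1$ with its companion obtained from the same tuple by cutting at level $2N+1$ and using $M_{2N+1}=0$. Adding the two estimates, the contributions from $\sum(-1)^iM_i$ cancel with the Betti sums $\lfloor\cdot/2n\rfloor+\lfloor\cdot/2\rfloor$. What remains is a lower bound of the form $2\cdot{}^\#\{j\}\ge 2(n+1)+\dots$, that is, $q\ge 2n+2$.

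The main obstacle is this last step: extracting the factor two. In particular, I must show that the even-index and odd-index contributions of $c_j^{2m_j}$ cannot cancel each other when $i(c_j)\ge2$. Here I would try the parity argument $i(c_j^{2m_j})\equiv C(P_j)\pmod 2$, together with the orientation condition $i(c_j^{2m_j})-i(c_j)\in2\Z$ for visibility from (\ref{visible-tower}). These should show that every visible $c_j^{2m_j}$ sits in degree $2N$ or $2N-2$. It then remains to compare this with the strict Morse inequality at degree $2N$, $M_{2N}-M_{2N-1}+\dots\ge\beta_{2N}-\beta_{2N-1}+\dots$, with $\beta_{2N}=2$, which is where the doubling should come from.
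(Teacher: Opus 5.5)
\textbf{There is a genuine gap.} It starts with your claim that the choice of $\delta$ forces $2\Delta_j\le C(P_j)$. Theorem 3.2 gives nothing of the kind. $\Delta_j$ is the $S^-$-weighted number of eigenangles with $\{m_j\theta/\pi\}\in(0,\delta)$, and the condition $\delta\max_k C(M_k)<\frac12$ only limits the size of $\delta$. So in general $i(c_j^{2m_j})=2N-C(P_j)+2\Delta_j$ can lie anywhere in $[2N-C(P_j),2N+C(P_j)]$. Already for a single irrational rotation $R(\theta)$ one gets $\Delta_j=1$ and $i(c_j^{2m_j})=2N+1$ for suitable tuples.

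Your claim in fact contradicts the Morse inequalities. Since $i(c_j)\ge2$, your own (correct) observation is that only the $c_j^{2m_j}$ can occupy degrees $2N-1,2N,2N+1$. This lets you cut the computation of Lemma 4.1 at $2N+1$ instead of $2N-1$. Because $\beta_{2N}=2$, one has $\sum_{i\le 2N+1}(-1)^i\beta_i=N\frac{n+1}{n}-n$, and the cut gives $(2N+1)_+^e-(2N+1)_+^o\ge n$. That is, at least $n$ visible even-index $c_j$ have $i(c_j^{2m_j})\ge 2N+2$.

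Consequently neither $M_{2N+1}=0$ nor ``every visible $c_j^{2m_j}$ sits in degree $2N$ or $2N-2$'' is available. Parity only gives $i(c_j^{2m_j})\equiv C(P_j)\equiv i(c_j)\pmod 2$ for visible towers.

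Your two routes to the factor two also fail. Taking $n\nmid N'$ lowers $\beta_{2N'}$ to $1$, which weakens the estimate. Adding the cuts at $2N-1$ and $2N+1$ for the same tuple counts the geodesics with $i(c_j^{2m_j})\ge 2N+2$ twice, so it gives nothing beyond $q\ge n+2$.

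\textbf{The missing idea is the complementary tuple} $(N',m_1',\dots,m_q')$ of the enhanced common index jump theorem, again with $n\mid N'$ and $N'\ge 2n$. The paper's proof uses it alongside the first tuple. In it $\Delta_j$ is replaced by $C(P_j)-\Delta_j$, so $i(c_j^{2m_j'})-2N'=2N-i(c_j^{2m_j})$, with the same parity and hence the same visibility. The argument then runs in three steps:
\begin{enumerate}
\item Applying the $(2N+1)$-cut to the first tuple gives $n$ visible even-index geodesics with $i(c_j^{2m_j})\ge 2N+2$.
\item Applying the same cut to the complementary tuple gives $n$ more with $i(c_j^{2m_j})\le 2N-2$. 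These two sets are disjoint.
\item In degree $2N$ only the $c_j^{2m_j}$ with $i(c_j^{2m_j})=2N$ contribute, each at most one dimension. So $M_{2N}\ge\beta_{2N}=2$ forces two further geodesics.
\end{enumerate}
Together these give $q\ge 2n+2$. This is exactly the paper's structure: $N_+^e\ge n$, $N_-^e\ge n$, plus $M_{2N}\ge 2$.
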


{\it Proof.}
In Section 6 of \cite{Liu2019}, the curvature pinching is used to
obtain $i(c_j)\ge2$. Add the nonzero-local-module condition to the
four counts $N_\pm^{e/o}$ there; Proposition~\ref{Rad1992'} then
removes every invisible tower. The first ECIJT(Enhanced Common Index Jump Theorem) tuple, chosen with
$n\mid N$ and $N\ge2n$, and its complementary tuple give
$N_+^e\ge n$ and $N_-^e\ge n$, hence $2n$ distinct visible
even-index geodesics. The uniform index bounds show that these towers
do not contribute in degree $2N$ and that every remaining tower
contributes at most one dimension. Since
$M_{2N}\ge\beta_{2N}=2$, two further distinct geodesics are required,
and therefore $q\ge2n+2$.
\hfill $\square$

{\bf{Proof of Theorem 1.1}}: By Lemma 4.1 and the definitions of
$(2N-1)_+^e$ and $(2N-1)_+^o$, we have
\begin{equation*}
q\ge(2N-1)_+^e\ge(2N-1)_+^e-(2N-1)_+^o\ge n+2.
\end{equation*}
We prove the conclusion by contradiction. Assume that there exist only $n+2$ contractible minimal closed geodesics.
Then every inequality above is an equality. In particular,
\[
q=(2N-1)_+^e=n+2,\qquad (2N-1)_+^o=0.
\]
Thus every $c_j$ is counted by $(2N-1)_+^e$ and has positive even
Morse index. Hence
\begin{equation*}
    i(c_j)\ge 2, \ \forall j=1,2, \dots, n+2.
\end{equation*}
Lemma 4.2 now gives $q\ge2n+2>n+2$, a contradiction. This completes
the proof of Theorem 1.1.

\end{document}